\theoremstyle{plain}
\def\nd{\noindent}
\def\thend{\rule{3mm}{3mm}}
\newtheorem{claim}{Claim}[section]
\newtheorem{theorem}{Theorem}[section]
\newtheorem{lemma}{Lemma}[section]
\newtheorem{corollary}{Corollary}[section]
\newtheorem*{theorem*}{Theorem}
\numberwithin{equation}{section}
\begin{document}
\title{Normalized solutions for a Schr\"{o}dinger equation with critical growth in $\mathbb{R}^{N}$}
\author{ Claudianor O. Alves,\footnote{C.O. Alves was partially supported by CNPq/Brazil   grant 304804/2017-7.} \,\, Chao Ji\footnote{Corresponding author} \footnote{C. Ji was partially supported by Natural Science Foundation of Shanghai(20ZR1413900,18ZR1409100).} \,\, and \,\,   Olimpio H. Miyagaki\footnote{O. H. Miyagaki was supported by FAPESP/Brazil grant 2019/24901-3  and CNPq/Brazil grant 307061/2018-3.}}

\maketitle

\begin{abstract}
In this paper  we study the existence of normalized solutions to the following nonlinear Schr\"{o}dinger equation with critical growth
\begin{align*}
 \left\{
\begin{aligned}
&-\Delta u=\lambda u+f(u), \quad
\quad
\hbox{in }\mathbb{R}^N,\\
&\int_{\mathbb{R}^{N}}|u|^{2}dx=a^{2},
\end{aligned}
\right.
\end{align*}
where $a>0$, $\lambda\in \mathbb{R}$ and $f$ has an exponential critical growth when $N=2$, and  $f(t)=\mu |u|^{q-2}u+|u|^{2^*-2}u$ with $q \in (2+\frac{4}{N},2^*)$, $\mu>0$ and $2^*=\frac{2N}{N-2}$ when $N \geq 3$. Our main results complement some recent results for $N \geq 3$ and it is totally new for $N=2$.
\end{abstract}

{\small \textbf{2010 Mathematics Subject Classification:} 35A15, 35J10, 35B09, 35B33.}

{\small \textbf{Keywords:} Normalized solutions, Nonlinear Schr\"odinger equation, Variational methods, Critical exponents.}

\section{Introduction}

This paper concerns the existence of normalized solutions to the following nonlinear Schr\"{o}dinger equation with critical growth
\begin{align}\label{11}
 \left\{
\begin{aligned}
	&-\Delta u=\lambda u+f(u), \quad
	\quad
	\hbox{in }\mathbb{R}^N,\\
	&\int_{\mathbb{R}^{N}}|u|^{2}dx=a^{2},
\end{aligned}
\right.
\end{align}
where $a>0$, $\lambda\in \mathbb{R}$ and $f$ has an exponential critical growth when $N=2$, and  $f(t)=\mu |u|^{q-2}u+|u|^{2^*-2}u$ with $q \in (2+\frac{4}{N},2^*)$, $\mu>0$ and 
$2^*=\frac{2N}{N-2}$ when $N \geq 3$.\\

The equation \eqref{11} arises when ones look for the solutions with prescribed mass for the nonlinear Schr\"{o}dinger equation
\begin{equation*}
i\frac{\partial \psi}{\partial t}+\triangle \psi+h(|\psi|^{2})\psi=0
\quad
\hbox{in }\mathbb{R}^N.
\end{equation*}

A stationary wave solution is a solution of the form $\psi(t, x)=e^{-i\lambda t}u(x)$, where $\lambda\in \mathbb{R}$ and $u:\mathbb{R}^N\rightarrow \mathbb{R}$ is a time-independent that must solve the elliptic problem
\begin{equation} \label{g00}
-\Delta u=\lambda u+g(u), \quad \mbox{in} \quad \mathbb{R}^N,
\end{equation}
where $g(u)=h(|u|^2)u$. For some values of $\lambda$ the existence of nontrivial solutions for (\ref{g00}) are obtained as the critical points of the action functional $J_{\lambda}:H^{1}(\mathbb{R}^N) \to \mathbb{R}$ given by
$$
J_\lambda(u)=\frac{1}{2}\int_{\mathbb{R}^N}(|\nabla u|^2-\lambda |u|^2)\,dx-\int_{\mathbb{R}^N}G(u)\,dx,
$$
where $G(t)=\int_{0}^{t}g(s)\,ds$. In this case the particular attention is devoted to {\it the least action solutions}, namely, the minimizing solutions of $J_\lambda$ among all non-trivial solutions.

Another important way to find the nontrivial solutions for (\ref{g00}) is to search for solutions with {\it prescribed mass}, and in this case $\lambda \in \mathbb{R}$ is part of the unknown. This approach seems to be particularly meaningful from the physical point of view, because there is a conservation of mass.

The present paper has been motivated by a seminal paper due to Jeanjean \cite{jeanjean1} that studied the existence of normalized solutions for a large class of Schr\"{o}dinger    equations of the type
\begin{align}\label{pjeanjean}
	\left\{
	\begin{aligned}
		&-\Delta u=\lambda u+g(u), \quad
		\quad
		\hbox{in }\mathbb{R}^N,\\
		& \int_{\mathbb{R}^{N}}|u|^{2}dx=a^{2},
	\end{aligned}
	\right.
\end{align}
with $N \geq 2$, where function $g:\mathbb{R} \to \mathbb{R}$ is an odd continuous function with subcritical growth that satisfies some technical conditions. One of these conditions is the following: $\exists (\alpha,\beta) \in \mathbb{R} \times \mathbb{R}$ satisfying
$$
\left\{
\begin{array}{l}
\frac{2N+4}{N}< \alpha \leq \beta < \frac{2N}{N-2}, \quad \mbox{for}\,\, N \geq 3, \\
\mbox{}\\
\frac{2N+4}{N}< \alpha \leq \beta, \quad \mbox{for} \,\, N=1,2,	
\end{array}	
\right.
$$
such that
$$
\alpha G(s) \leq g(s)s \leq \beta G(s) \quad \mbox{with} \quad G(s)=\int_{0}^{s}g(t)\,dt.
$$
As an example of a function $g$ that satisfies the above condition is $g(s)=|s|^{q-2}s$ with $q \in (2+\frac{4}{N},2^*)$ when $N\geq 3$ and $q> 4$ if $N=2$. In order to overcome the loss of compactness of the Sobolev embedding in  whole $\mathbb{R}^N$, the author worked on the space $H_{rad}^{1}(\mathbb{R}^N)$ to get  some compactness results. However the most important and interesting point, in our opinion, is the fact that Jeanjean did not work directly with the energy functional $I:H^{1}(\mathbb{R}^N) \to \mathbb{R}$ associated with the problem \eqref{pjeanjean} given by

$$
I(u)=\frac{1}{2}\int_{\mathbb{R}^{N}} |\nabla u|^2 dx-\int_{\mathbb{R}^{N}} G(u)dx.
$$
In his approach, he considered the functional $\widetilde{I}:H^{1}(\mathbb{R}^N) \times \mathbb{R}\to \mathbb{R}$ given by
$$
\widetilde{I}(u,s)=\frac{e^{2s}}{2}\int_{\mathbb{R}^{N}} |\nabla u|^2 dx-\frac{1}{e^{Ns}}\int_{\mathbb{R}^{N}} G(e^{\frac{Ns}{2}}u(x))\,dx.
$$
After a careful analysis, it was proved that $I$ and $\widetilde{I}$ satisfy the mountain pass geometry on the manifold
$$
S(a)=\{u \in H^{1,2}(\mathbb{R}^N)\,:\, | u |_2=a\, \},
$$
and their mountain pass levels are equal, 
which we denote by $\gamma(a)$. Moreover, using the properties of $\widetilde{I}(u,s)$, it  was obtained a $(PS)$ sequence $(u_n)$ to $I$ associated with the mountain pass level $\gamma(a)$  which is bounded in $H_{rad}^{1}(\mathbb{R}^N)$. Finally, after some estimates, the author was able to prove that the weak limit of $(u_n)$, denoted by $u$, is nontrivial, $u \in S(a)$ and  it verifies
$$
	-\Delta u-g(u)=\lambda_au \quad \mbox{in} \quad \mathbb{R}^N,
$$
for some $\lambda_a<0$. An example of a nonlinearity explored in \cite{jeanjean1} we cite
$$
g(t)=\mu |t|^{q-2}t\quad t \in \mathbb{R},  \leqno{(g_0)}
$$
where $\mu>0$ and $q \in (2+\frac{4}{N},2^*)$.
We recall that   the study of the normalized problem despite being  more convenient in the application, this brings some difficulties such as Nehari manifold method can not be applied  because the constant $\lambda_a$ is unknown  in the problem;  it is necessary to prove that  the weak limit  belongs  to the constrained manifold; and   also it  brings some difficult to apply some usual approach for obtaining the boundedness of the Palais Smale sequence.

We recall that the number $\bar{q}:=2+\frac{4}{N}$ is called in the literature as $L^2$-critical exponent, which come from   Gagliardo-Nirenberg inequality (see \cite[Theorem 1.3.7, page 9]{CazenaveLivro}. If $g$ is of the form $(g_0)$ with $q \in (2,\bar{q})$, we say that the problem is $L^2$-subcritical, while in the case $q \in (\bar{q},2^*)$ the problem is $L^2$-supercritical. Associated with the $L^2$-supercritical problem, we would like to cite \cite{Bartschmolle}, where the authors studied a problem involving vanishing potential. In the purely $L^2$-critical case, that is, $q =2+\frac{4}{N}$, related problems were studied in \cite{Cheng,Miao}.

In \cite{Nicola1},  Soave  studied the normalized solutions for the nonlinear Schr\"{o}dinger equation (\ref{11}) with combined power nonlinearities of the type
$$
f(t)=\mu |t|^{q-2}t+|t|^{p-2}t,\quad t \in \mathbb{R},  \leqno{(f_0)}
$$
where
$$
2<q\leq 2+\frac{4}{N}\leq p<2^{*},\,\, p\neq q \quad \text{and}\,\, \mu\in \mathbb{R}.
$$
He showed that interplay between subcritical, critical and supercritical nonlinearities strongly affects the geometry of the functional as well as the  existence and properties of ground states.

Recently some authors have considered the problem (\ref{11}) with $f$ of the form $(f_0)$ but with $p=2^*$, which implies that $f$ has a critical growth in the Sobolev sense. In \cite{JeanjeanJendrejLeVisciglia}, the existence of a ground state normalized solution is obtained as minimizer of the constrained functional assuming that $q \in (2,2+\frac{4}{N})$. While in \cite{JeanjeanLe} a multiplicity result was established, where the second solution is not a ground state. For the general case $q \in (2,2^*)$, we would like to mention Soave \cite{Nicola2}, where the existence result was obtained by imposing that $\mu a^{(1-\gamma_p)q}<\alpha$, where $\alpha$ is a specific constant that depends on $N$ and $q$ and $\gamma_p=\frac{N(p-2)}{2p}$.  We have seen  that the results in the paper are deeply dependent on the assumptions about $a$ and $\mu$, because by the Pohozaev identidy the problem (\ref{11}) does not have any solution if $\lambda =- a \geq 0$  and $\mu >0.$ Still related to the case $q \in (2+\frac{4}{N},2^*)$, we would like to refer \cite{AkahoriIbrahimKikuchiNawa1,AkahoriIbrahimKikuchiNawa2} where the existence of least action solutions was proved with $\mu>0$ and $N\geq 4,$ and for $N=3$ by supposing a technical on the constants $\lambda$ and $\mu$.

We recall that elliptic problems involving critical Sobolev exponent were studied by many researchers after appeared the pioneering paper  by Brezis and Nirenberg \cite{BN}, which have had many progresses in several directions. We would like to mention  the  excellent book \cite{Willem}, for a review on this subject. In our setting, since  if $\lambda=0$, the problem (\ref{11}) does not have any solution for any $ \mu$, then  taking $\lambda$ as  a Lagrange multiplier, it is able to get solution, by combining the arguments made in \cite{BN} with the concentration compactness principle. For the reader interested in normalized solutions for the Schr\"{o}dinger equations, we would also like to refer  \cite{BartschJeanjeanSaove}, \cite{BartschSaove}, \cite{BellazziniJeanjeanLuo}, \cite{CingolaniJeanjean}, \cite{Guo}, \cite{JeanjeanLu},  \cite{MEDERSKISCHINO},  \cite{BenedettaTavaresVerzini}, \cite{Stefanov},   \cite{TaoVisanZhang},  \cite{WangLi} and references therein.

Our main result for the Sobolev critical case is the following:
\begin{theorem}\label{T1}
Assume that $f$ is of the form $(f_0)$ with $p=2^*$ and $q \in (2+\frac{4}{N},2^*)$. Then, there exists $\mu^*=\mu^*(a)>0$ such that  the problem \eqref{11} admits a couple $(u_{a}, \lambda_{a})\in H^{1}(\mathbb{R}^N)\times \mathbb{R}$ of weak solutions such that $\int_{\mathbb{R}^{N}}|u|^{2}dx=a^{2}$ and $\lambda_{a}<0$ for all $\mu \geq \mu^*$.
\end{theorem}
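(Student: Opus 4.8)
The plan is to realize $u_a$ as a mountain pass critical point of the constrained energy
$$
I(u)=\frac12\int_{\mathbb{R}^N}|\nabla u|^2\,dx-\frac{\mu}{q}\int_{\mathbb{R}^N}|u|^q\,dx-\frac{1}{2^*}\int_{\mathbb{R}^N}|u|^{2^*}\,dx
$$
on $S(a)$, following Jeanjean's scaling device described in the introduction. For $u\in S(a)$ I set $u_s(x)=e^{Ns/2}u(e^s x)$, so that $u_s\in S(a)$ and, writing $\gamma_q=\frac{N(q-2)}{2q}$,
$$
\widetilde I(u,s):=I(u_s)=\frac{e^{2s}}{2}\|\nabla u\|_2^2-\frac{\mu e^{q\gamma_q s}}{q}\|u\|_q^q-\frac{e^{2^*s}}{2^*}\|u\|_{2^*}^{2^*}.
$$
Since $q>2+\frac4N$ gives $q\gamma_q=\frac{N(q-2)}{2}>2$ and also $2^*>2$, each fiber $s\mapsto\widetilde I(u,s)$ is positive near $-\infty$ and tends to $-\infty$ as $s\to+\infty$, producing a mountain pass geometry for $I$ on $S(a)$ with a strictly positive level $\gamma(a)$. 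First I would restrict everything to $H^1_{rad}(\mathbb{R}^N)$ and, exactly as in Jeanjean's argument applied to the augmented functional $\widetilde I$ on $S(a)\times\mathbb{R}$, produce a Palais--Smale sequence $(u_n)\subset S(a)$ for $I$ at level $\gamma(a)$ that additionally satisfies the Pohozaev-type relation
$$
P(u_n):=\|\nabla u_n\|_2^2-\mu\gamma_q\|u_n\|_q^q-\|u_n\|_{2^*}^{2^*}\to 0,
$$
coming from $\partial_s\widetilde I\to0$.

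The second step is the boundedness of $(u_n)$. Using $q\gamma_q>2$ together with $q<2^*$ (whence $q\gamma_q<2^*$), the combination
$$
I(u)-\frac{1}{q\gamma_q}P(u)=\Big(\tfrac12-\tfrac{1}{q\gamma_q}\Big)\|\nabla u\|_2^2+\Big(\tfrac{1}{q\gamma_q}-\tfrac{1}{2^*}\Big)\|u\|_{2^*}^{2^*}
$$
has both coefficients strictly positive; evaluated along $(u_n)$ with $I(u_n)\to\gamma(a)$ and $P(u_n)\to0$ it bounds $\|\nabla u_n\|_2$ and $\|u_n\|_{2^*}$, and with the mass constraint this yields boundedness in $H^1_{rad}(\mathbb{R}^N)$. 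Passing to a subsequence, $u_n\rightharpoonup u$ in $H^1_{rad}$, and by the compactness of the embedding $H^1_{rad}(\mathbb{R}^N)\hookrightarrow L^q(\mathbb{R}^N)$ for $q\in(2,2^*)$ one gets $u_n\to u$ in $L^q$, which controls the $L^2$-supercritical but Sobolev-subcritical term.

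The decisive obstacle is the loss of compactness carried by the critical term $\|u\|_{2^*}^{2^*}$, and this is precisely where the hypothesis $\mu\ge\mu^*$ is used. By a concentration-compactness and Brezis--Lieb analysis, the only mechanism by which strong convergence of the gradients can fail is the splitting off of Aubin--Talenti bubbles, each carrying energy at least $\frac1N S^{N/2}$, where $S$ denotes the best Sobolev constant; it therefore suffices to ensure the strict inequality $\gamma(a)<\frac1N S^{N/2}$. The large coefficient $\mu$ is what makes this possible: because $-\frac\mu q\|u\|_q^q$ lowers the energy, the maximum of each fiber $s\mapsto\widetilde I(w,s)$ decreases as $\mu$ grows, so evaluating $\gamma(a)\le\max_s\widetilde I(w,s)$ on a fixed admissible profile $w$ and letting $\mu$ increase drives this maximum to $0^+$; hence there is $\mu^*=\mu^*(a)>0$ with $\gamma(a)<\frac1N S^{N/2}$ for all $\mu\ge\mu^*$. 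I expect the technical heart of the argument to lie exactly here: in making this comparison quantitative through sharp test-function estimates while keeping $\gamma(a)>0$ so that the weak limit is nontrivial.

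Once $\gamma(a)<\frac1N S^{N/2}$, the bubbling alternative is excluded, $\|u_n\|_{2^*}\to\|u\|_{2^*}$ and $u_n\to u$ strongly in the gradient norm, $u\neq0$, and $u$ weakly solves $-\Delta u=\lambda_a u+\mu|u|^{q-2}u+|u|^{2^*-2}u$ with Lagrange multiplier $\lambda_a=\lim_n\frac{1}{a^2}\langle I'(u_n),u_n\rangle$. Being an $H^1$ solution on $\mathbb{R}^N$, $u$ satisfies the Pohozaev identity, which reduces to $P(u)=0$; combining this with the equation tested by $u$ gives $\lambda_a\|u\|_2^2=-\mu(1-\gamma_q)\|u\|_q^q$, and since $q<2^*$ forces $\gamma_q<1$ while $u\neq0$, we conclude $\lambda_a<0$. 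Finally, comparing $\lambda_a a^2=\lim_n\lambda_n a^2=\|\nabla u\|_2^2-\mu\|u\|_q^q-\|u\|_{2^*}^{2^*}$ with the same identity $\lambda_a\|u\|_2^2=-\mu(1-\gamma_q)\|u\|_q^q$ yields $\lambda_a a^2=\lambda_a\|u\|_2^2$, and as $\lambda_a\neq0$ this forces $\|u\|_2^2=a^2$, i.e.\ $u\in S(a)$. Thus $(u,\lambda_a)=(u_a,\lambda_a)$ is the desired couple of weak solutions.
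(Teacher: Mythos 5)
Your proposal follows essentially the same route as the paper: Jeanjean's augmented functional $\widetilde I$ on $S(a)\times\mathbb{R}$ restricted to $H^{1}_{rad}(\mathbb{R}^N)$, the key observation that the mountain pass level $\gamma_\mu(a)\to 0$ as $\mu\to+\infty$ (the paper's Lemma \ref{ESTMOUNTPASS}), and Lions' concentration--compactness to kill the critical-term loss of compactness once the level is small; the paper's version of your threshold is the cruder bound $\limsup_n|\nabla u_n|_2^2\le C\gamma_\mu(a)<\tfrac12 S^{N/2}$ rather than the classical $\gamma_\mu(a)<\tfrac1N S^{N/2}$, but the mechanism is identical. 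One caution on ordering: strong convergence of the gradients is \emph{not} an immediate consequence of excluding bubbling, as your write-up suggests --- the paper first proves $\lambda_a<0$ (via the identity $\lambda_n a^2=-\mu\bigl(\tfrac{N}{q}-\tfrac{N-2}{2}\bigr)|u_n|_q^q+o_n(1)$, which is exactly your Pohozaev/Nehari computation carried out along the sequence instead of on the limit) and only then tests the equation with $u_n-u$ to get $u_n\to u$ in $H^{1}$ and $|u|_2=a$; since your derivation of $\lambda_a<0$ from the limit equation does not actually use gradient convergence, your argument is sound after swapping those two steps.
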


The above theorem complements the results found in \cite{Nicola2} for the $L^2$-supercritical case, because in that paper $\mu \in (0,a^{-(1-\gamma_p)q}\alpha)$ for some $\alpha>0$, then $\mu$ cannot be large enough, while in our paper $\mu$ can be large enough, because $\mu \in [\mu^*(a),+\infty)$.  Here, we used a different approach from that explored in \cite{Nicola2}, because we work directly with the mountain pass geometry and concentration-compactness principle due to Lions \cite{Lions}, while in \cite{Nicola2}, Soave employed minimization technique and used the properties of the Pohozaev manifold.

Motivated by the research made in the critical Sobolev case, in this paper we also study the exponential critical growth  for $N=2$, which is a novelty for this type of problem. To the best our knowledge we have not  found any reference involving   normalizing problem  with the exponential critical growth. We recall that in $\mathbb{R}^2$, the natural growth restriction on the function $f$  is given by the inequality
of Trudinger and Moser \cite{M,T}. More precisely, we say that a
function $f$ has an exponential critical growth if there is $\alpha_0 >0$ such that
$$ \lim_{|s| \to \infty} \frac{|f(s)|}{e^{\alpha s^{2}}}=0
\,\,\, \forall\, \alpha > \alpha_{0}\quad \mbox{and} \quad
\lim_{|s| \to \infty} \frac{|f(s)|}{e^{\alpha s^{2}}}=+ \infty
\,\,\, \forall\, \alpha < \alpha_{0}.
$$
We would like to mention that problems involving exponential critical growth have received a special attention at last years, see for example, \cite{A, AdoOM, ASS1, Montenegro, Cao,DMR,DdOR, OS,doORuf} for semilinear elliptic equations, and \cite{1,AlvesGio,2,5} for quasilinear equations.

In this case, we assume that $f$ is a continuous function that satisfies the following conditions:

\begin{itemize}
	\item[\rm ($f_1$)]$\displaystyle \lim_{t \to 0}\frac{|f(t)|}{|t|^{\tau}}=0$ as $t\rightarrow 0$,\, \mbox{for some}\, $\tau>3$;

	\item[\rm ($f_2$)]$$
	\lim_{|t|\rightarrow +\infty} \frac{|f(t)|}{e^{\alpha t^{2}}}
	=
	\begin{cases}
		0,& \hbox{for } \alpha> 4\pi,\\
		+\infty,& \hbox{for }  0<\alpha<4\pi;
	\end{cases}
	$$

	\item[\rm ($f_3$)] there exists a positive constant $\theta>4$ such that
	\begin{equation*}
		0<\theta F(t)\leq tf(t), \, \, \forall\,  t \not= 0,
		\,
		\hbox{where }
		F(t)=\int_{0}^{t}f(s)ds;
	\end{equation*}
	
	\item[\rm ($f_4$)]  there exist constants $p>4$ and $\mu>0$ such that
	\begin{equation*}
		sgn(t)f(t)\geq \mu  \, |t|^{p-1}\quad \text{for all}\,\, t \not=0,
	\end{equation*}
where $sgn:\mathbb{R}\setminus \{0\} \to \mathbb{R}$ is given by
$$
sgn(t)=
\left\{
\begin{array}{l}
1, \quad \mbox{if} \quad t>0
\mbox{}\\
-1, \quad \mbox{if} \quad t<0.	
\end{array}
\right.
$$

\end{itemize}

Our main result is as follows:
\begin{theorem}\label{T2}
	Assume that $f$ satisfies $(f_1)-(f_4)$. If $a \in (0,1)$, then there exists $\mu^*=\mu^*(a)>0$ such that  the problem \eqref{11} admits a couple $(u_{a}, \lambda_{a})\in H^{1}(\mathbb{R}^2)\times \mathbb{R}$ of weak solutions with $\int_{\mathbb{R}^{2}}|u|^{2}dx=a^{2}$ and $\lambda_{a}<0$ for all $\mu \geq \mu^*$.
\end{theorem}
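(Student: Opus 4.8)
The plan is to adapt the minimax scheme of Jeanjean \cite{jeanjean1} to the exponential critical setting, the novelty being that the Trudinger--Moser inequality must play the role that the Sobolev inequality plays when $N\geq 3$. I would work throughout in the radial space $H^1_{rad}(\mathbb{R}^2)$ on the constraint $S(a)$, and introduce the augmented functional
$$
\widetilde{I}(u,s)=\frac{e^{2s}}{2}\int_{\mathbb{R}^2}|\nabla u|^2\,dx-e^{-2s}\int_{\mathbb{R}^2}F(e^{s}u)\,dx,
$$
corresponding to the $L^2$-preserving scaling $u\mapsto e^{s}u(e^{s}\cdot)$ in dimension two. Using $(f_1)$, which forces $F(t)=o(|t|^{\tau+1})$ near the origin with $\tau>3$, together with the Trudinger--Moser inequality, one checks that $I$ has a strict local minimum structure on small spheres of $S(a)$, while $(f_4)$ with $p>4=2+\tfrac{4}{2}$ (the $L^2$-critical exponent when $N=2$) makes $I$ take arbitrarily negative values along the scaling, producing the mountain pass geometry. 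As in \cite{jeanjean1} the levels of $I$ on $S(a)$ and of $\widetilde{I}$ on $S(a)\times\mathbb{R}$ coincide; call this common value $\gamma(a)>0$. The output of this step is a Palais--Smale sequence $(u_n)\subset S(a)$ for $I$ at level $\gamma(a)$ which, thanks to the extra variable $s$, asymptotically satisfies the Pohozaev identity $P(u_n):=\|\nabla u_n\|_2^2-\int_{\mathbb{R}^2}f(u_n)u_n\,dx+2\int_{\mathbb{R}^2}F(u_n)\,dx\to 0$.

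Combining $I(u_n)\to\gamma(a)$, $P(u_n)\to 0$ and the Ambrosetti--Rabinowitz condition $(f_3)$ with $\theta>4$, a standard elimination gives a bound of the form $\limsup_n\|\nabla u_n\|_2^2\le\frac{2(\theta-2)}{\theta-4}\,\gamma(a)$, so that $(u_n)$ is bounded in $H^1_{rad}(\mathbb{R}^2)$ and, up to a subsequence, $u_n\rightharpoonup u$ with $u_n\to u$ almost everywhere and strongly in $L^t(\mathbb{R}^2)$ for every $t>2$ by the compact radial embedding.

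The heart of the argument, and the step I expect to be the main obstacle, is to pass to the limit in the exponential nonlinearity. By $(f_2)$ the functions $f(u_n)$ behave like $e^{\alpha u_n^2}$ with $\alpha$ close to $4\pi$, so the convergences $\int F(u_n)\to\int F(u)$ and $\int f(u_n)u_n\to\int f(u)u$ can be justified only once we know that $\limsup_n\|\nabla u_n\|_2^2$ stays strictly below the Trudinger--Moser threshold. This is exactly where the hypothesis $a\in(0,1)$ enters, since it guarantees a positive admissible window $\|\nabla u_n\|_2^2<1-a^2$ within which $\sup\int(e^{4\pi u^2}-1)\,dx<\infty$ holds uniformly. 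To land inside this window I would exploit $(f_4)$: since $F(t)\ge\frac{\mu}{p}|t|^p$, testing the minimax with a fixed admissible path shows $\gamma(a)\to 0$ as $\mu\to+\infty$, so there is $\mu^*=\mu^*(a)>0$ with $\gamma(a)$ small enough to force $\limsup_n\|\nabla u_n\|_2^2<1-a^2$ for all $\mu\ge\mu^*$, in view of the bound of the previous paragraph. With the uniform integrability secured, a Lions-type convergence lemma for exponential nonlinearities \cite{Lions} upgrades the almost everywhere convergence to $f(u_n)\to f(u)$ in the appropriate dual sense and yields the convergence of the energy terms.

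It then remains to identify the limit. The Lagrange multipliers $\lambda_n$ associated with $u_n$ on $S(a)$ are bounded and, up to a subsequence, converge to some $\lambda_a$, and $u$ is a weak solution of $-\Delta u=\lambda_a u+f(u)$. The limit is nontrivial: were $u\equiv 0$, the convergence of the nonlinear terms together with $I(u_n)\to\gamma(a)$ and $P(u_n)\to 0$ would force $\gamma(a)=0$, contradicting $\gamma(a)>0$. Passing to the limit in $P(u_n)\to 0$ gives the Pohozaev identity for $u$, which in dimension two reduces to $\lambda_a a^2=-2\int_{\mathbb{R}^2}F(u)\,dx$; since $(f_4)$ forces $F(u)>0$ for $u\not\equiv 0$, we conclude $\lambda_a<0$. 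Finally, the negativity of $\lambda_a$ renders $-\Delta-\lambda_a$ coercive, and combining this with the convergence of the nonlinear terms and a Brezis--Lieb argument upgrades $u_n\rightharpoonup u$ to strong convergence in $H^1(\mathbb{R}^2)$, so that no mass is lost and $u\in S(a)$. This produces the desired couple $(u_a,\lambda_a)$ and completes the proof.
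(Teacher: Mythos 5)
Your proposal is correct and follows essentially the same route as the paper: the Jeanjean augmented functional with the two-dimensional scaling $e^{s}u(e^{s}\cdot)$, the mountain pass geometry from $(f_1)$--$(f_4)$, the bound $\limsup_n|\nabla u_n|_2^2\le\frac{2(\theta-2)}{\theta-4}\gamma_\mu(a)$ from $(f_3)$ and the asymptotic Pohozaev identity, the choice of $\mu^*$ via $\gamma_\mu(a)\to 0$ so as to land strictly inside the Trudinger--Moser window $1-a^2$ permitted by $a\in(0,1)$, the nontriviality and sign arguments giving $\lambda_a<0$ from $\lambda_a a^2=-2\int F(u)$, and the final upgrade to strong $H^1$ convergence using the coercivity coming from $\lambda_a<0$. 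All of these steps coincide with the paper's Lemmas 4.7--4.10 and the concluding argument of Section 5, so no further comparison is needed.
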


 In the proof of Theorem \ref{T1} and Theorem \ref{T2} we borrow the ideas developed in Jeanjean \cite{jeanjean1}. The main difficulty in the proof of these theorems is associated with the fact that we are working with critical nonlinearities in whole $\mathbb{R}^N$. As above mentioned, in the proof of Theorem \ref{T1}, the concentration-compactness principle due to Lions \cite{Lions} is crucial in our arguments, while in the proof of Theorem \ref{T2}, the Trundiger-Moser inequality developed by Cao \cite{Cao} plays an important role in a lot of estimates. Moreover, in the proofs of  these theorems we shall work on the space $H^{1}_{rad}(\mathbb{R}^N)$, because it has very nice compact embeedings. Moreover, by Palais'  principle of symmetric criticality, see \cite{palais}, it is well known that the solutions in $H^{1}_{rad}(\mathbb{R}^N)$ are in fact solutions in whole $H^{1}(\mathbb{R}^N)$.

\vspace{0.5 cm}

\noindent \textbf{Notation:} From now on in this paper, otherwise mentioned, we use the following notations:
\begin{itemize}
	\item $B_r(u)$ is an open ball centered at $u$ with radius $r>0$, $B_r=B_r(0)$.

	\item   $C,C_1,C_2,...$ denote any positive constant, whose value is not relevant.
	
	\item  $|\,\,\,|_p$ denotes the usual norm of the Lebesgue space $L^{p}(\mathbb{R}^N)$, for $p \in [1,+\infty]$,
    $\Vert\,\,\,\Vert$ denotes the usual norm of the Sobolev space $H^{1}(\mathbb{R}^N)$.

	\item $o_{n}(1)$ denotes a real sequence with $o_{n}(1)\to 0$ as $n \to +\infty$.

\end{itemize}

\section{Normalized solutions: The Sobolev critical case for $N\geq 3$}

In order to follow the same strategy in \cite{jeanjean1}, we need the following definitions to introduce our variational procedure.

\begin{itemize}
	\item[\rm (1)] $S(a)=\{u \in H^{1}(\mathbb{R}^N)\,:\, | u |_2=a\, \}$ is the sphere of radius $a>0$ defined with the norm $|\,\,\,\,|_2$.

	\item[\rm (2)] $J: H^{1}(\mathbb{R}^N)\rightarrow \mathbb{R}$ with
 $$
	J(u)=\frac{1}{2}\int_{\mathbb{R}^{N}} |\nabla u|^2 dx-\int_{\mathbb{R}^{N}} F(u)dx,
$$
where
$$
F(t)=\frac{\mu}{q}|t|^q+\frac{1}{2^*}|t|^{2^*}, \quad t \in \mathbb{R}.
$$
Hereafter,  $H=H^{1}(\mathbb{R}^N)\times \mathbb{R}$ is equipped with the scalar product
$$
\langle\cdot, \cdot\rangle_{H}=\langle\cdot, \cdot\rangle_{H^{1}(\mathbb{R}^N)}+\langle\cdot, \cdot\rangle_{\mathbb{R}}
$$
and corresponding norm
$$
\Vert \cdot\Vert_{H}=(\Vert \cdot\Vert_{H}^{2}+ \vert \cdot\vert_{ \mathbb{R}}^{2})^{1/2}.
$$
In this section, $f$ denotes the function $f(t)=\mu|t|^{q-2}t+|t|^{2^*-2}t$ with $t \in\mathbb{R}$, and so, $F(t)=\int_0^tf(s)\,ds$.
    \item[\rm (3)]$\mathcal{H}: H\rightarrow H^{1}(\mathbb{R}^N)$ with
	\begin{equation*}
	\mathcal{H}(u, s)(x)=e^{\frac{Ns}{2}}u(e^{s}x).
\end{equation*}

  \item[\rm (4)] $\tilde{J}: H\rightarrow \mathbb{R}$ with
  $$
	\tilde{J}(u, s)=\frac{e^{2s}}{2}\int_{\mathbb{R}^{N}} |\nabla u|^2 dx-\frac{1}{e^{Ns}}\int_{\mathbb{R}^{N}} F(e^{\frac{Ns}{2}}u(x))\,dx
$$
or
$$
\tilde{J}(u, s)=\frac{1}{2}\int_{\mathbb{R}^{N}} |\nabla v|^2 dx-\int_{\mathbb{R}^{N}} F(v(x))\,dx=J(v)\quad  \mbox{for}\ v=\mathcal{H}(u, s)(x).
$$
\end{itemize}

Throughout this section, $S$ denotes the following constant
\begin{equation} \label{Sp*}
	S=
	\inf_{ {\footnotesize{
				\begin{array}{l}
					u \in D^{1,2}(\mathbb{R}^N) \\
					u \not=0
	\end{array}}}}
	\frac{ \int_{\mathbb{R}^N}|\nabla u|^{2}\,dx}{\left(\int_{\mathbb{R}^N}|u|^{2^*}\,dx\right)^{\frac{2}{2^*}}},
\end{equation}
where $2^*=\frac{2N}{N-2}$ for $N\geq 3$, and $D^{1,2}(\mathbb{R}^N)$ is the Banach space given by
$$
D^{1,2}(\mathbb{R}^N)=\left\{u \in L^{2^*}(\mathbb{R}^N)\;:\;|\nabla u|^{2} \in L^{2}(\mathbb{R}^N)\right\}
$$
endowed with the norm
$$
\|u\|_{D^{1,2}(\mathbb{R}^N)}=\left(\int_{\mathbb{R}^N}|\nabla u|^{2}\,dx\right)^{\frac{1}{2}}.
$$
It is well known that the embedding $D^{1,2}(\mathbb{R}^N) \hookrightarrow L^{2^*}(\mathbb{R}^N)$ is continuous.

\subsection{The minimax approach}

We shall prove that $\tilde{J}$ on $S(a)\times \mathbb{R}$  possesses a kind of mountain-pass geometrical structure.

\begin{lemma}\label{vicentej} Let $u\in S(a)$ be arbitrary but fixed. Then we have:\\
\noindent (i)  $\vert \nabla \mathcal{H}(u, s) \vert_{2}\rightarrow 0$ and $J(\mathcal{H}(u, s))\rightarrow 0$ as $s\rightarrow -\infty$;\\
\noindent (ii) $\vert \nabla \mathcal{H}(u, s) \vert_{2}\rightarrow +\infty$ and $J(\mathcal{H}(u, s))\rightarrow -\infty$ as $s\rightarrow +\infty$.
\end{lemma}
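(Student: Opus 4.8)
The plan is to reduce both assertions to explicit one-variable limits by exploiting the scaling structure of $\mathcal{H}(u,s)(x)=e^{\frac{Ns}{2}}u(e^sx)$. First I would record the two elementary identities obtained from the change of variables $y=e^sx$: the $L^2$-norm is scale invariant, $|\mathcal{H}(u,s)|_2=|u|_2=a$, so that the whole curve stays on $S(a)$; and the Dirichlet energy scales as $|\nabla \mathcal{H}(u,s)|_2=e^{s}|\nabla u|_2$. The latter identity already settles the gradient statements in (i) and (ii), since $e^s\to 0$ as $s\to-\infty$ and $e^s\to+\infty$ as $s\to+\infty$, and $|\nabla u|_2$ is a fixed positive constant (it is positive because $u\in S(a)$ is nonconstant, hence nonzero, in $H^1(\mathbb{R}^N)$).

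Next I would compute $J(\mathcal{H}(u,s))=\tilde{J}(u,s)$ in closed form. Substituting $F(t)=\frac{\mu}{q}|t|^{q}+\frac{1}{2^*}|t|^{2^*}$ into the definition of $\tilde{J}$ and applying the same change of variables to evaluate $e^{-Ns}\int_{\mathbb{R}^N}F(e^{\frac{Ns}{2}}u)\,dx$, the exponents collapse to
$$
\tilde{J}(u,s)=\frac{e^{2s}}{2}|\nabla u|_2^2-\frac{\mu}{q}\,e^{\frac{N(q-2)}{2}s}|u|_q^q-\frac{1}{2^*}\,e^{2^*s}|u|_{2^*}^{2^*},
$$
a combination of three exponentials in $s$ with fixed positive coefficients (here I use that $|\nabla u|_2,|u|_q,|u|_{2^*}$ are finite and positive, by the continuous embeddings $H^1(\mathbb{R}^N)\hookrightarrow L^q(\mathbb{R}^N)$ and $D^{1,2}(\mathbb{R}^N)\hookrightarrow L^{2^*}(\mathbb{R}^N)$). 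With this formula the limits become a matter of comparing the three exponents $2$, $\frac{N(q-2)}{2}$ and $2^*$.

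The decisive point, and the only place where the hypothesis $q\in(2+\tfrac{4}{N},2^*)$ is used, is the ordering of these exponents. From $q>2+\frac{4}{N}$ I get $\frac{N(q-2)}{2}>2$, while from $q<2^*$ I get $\frac{N(q-2)}{2}<\frac{N(2^*-2)}{2}=2^*$, so that $2<\frac{N(q-2)}{2}<2^*$. As $s\to-\infty$ all three exponents are positive, hence every term tends to $0$ and $J(\mathcal{H}(u,s))\to 0$, proving (i). As $s\to+\infty$ the behaviour is governed by the largest exponent $2^*$, so the critical term $-\frac{1}{2^*}e^{2^*s}|u|_{2^*}^{2^*}$ dominates both the positive kinetic term $\frac{e^{2s}}{2}|\nabla u|_2^2$ and the negative $q$-term, driving $\tilde{J}(u,s)\to-\infty$ and establishing (ii). I do not anticipate a genuine obstacle here: the substance of the argument is verifying that the negative $2^*$-term outgrows the positive kinetic term as $s\to+\infty$, which is exactly what the $L^2$-supercriticality $q>2+\frac{4}{N}$ (forcing both nonlinear exponents to exceed $2$) together with the criticality of the $2^*$-term guarantees; everything else is routine bookkeeping with the change of variables.
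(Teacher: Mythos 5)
Your proof is correct and follows essentially the same route as the paper: both rest on the scaling identities $|\mathcal{H}(u,s)|_2=a$, $|\nabla\mathcal{H}(u,s)|_2=e^{s}|\nabla u|_2$, $|\mathcal{H}(u,s)|_\xi^\xi=e^{\frac{(\xi-2)Ns}{2}}|u|_\xi^\xi$, and on the exponent ordering $2<\frac{N(q-2)}{2}<2^*$ forced by $q\in(2+\frac{4}{N},2^*)$. The only cosmetic difference is in (ii): the paper discards the negative $2^*$-term and lets the $q$-term dominate the kinetic term, whereas you let the $2^*$-term dominate; both are valid for the same reason.
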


\begin{proof} \mbox{} By a straightforward calculation, it follows that
\begin{equation} \label{CONV0}\int_{\mathbb{R}^N}\vert \mathcal{H}(u, s)(x)\vert^{2}\,dx=a^{2}, \quad \int_{\mathbb{R}^N}|\mathcal{H}(u, s)(x)|^{\xi}\,dx= e^{\frac{(\xi-2)Ns}{2}}\int_{\mathbb{R}^N}|u(x)|^{\xi}\,dx, \quad \forall \xi \geq 2,
\end{equation}
and
\begin{equation} \label{CONVJ1*}
\int_{\mathbb{R}^N}\vert \nabla \mathcal{H}(u, s)(x)\vert^{2}\,dx=e^{2s}\int_{\mathbb{R}^N}|\nabla u|^2 dx.
\end{equation}
From the above equalities, fixing $\xi>2$, we have
\begin{equation} \label{CONV1}
\vert \nabla \mathcal{H}(u, s) \vert_{2}\rightarrow 0 \quad \mbox{and} \quad |\mathcal{H}(u,s)|_{\xi} \to 0 \quad  \mbox{as} \quad s \to -\infty.
\end{equation}
Hence,
$$
\int_{\mathbb{R}^N}|F( \mathcal{H}(u, s))|\,dx \leq C_1\int_{\mathbb{R}^N}| \mathcal{H}(u, s)|^{q}\,dx+C_2\int_{\mathbb{R}^N}| \mathcal{H}(u, s)|^{2^*}\,dx \to 0 \quad \mbox{as} \quad s \to -\infty,
$$
from where it follows that
$$
J(\mathcal{H}(u, s))\rightarrow 0 \quad \mbox{as} \quad  s\rightarrow -\infty,
$$
showing $(i)$.

In order to show $(ii)$, note that by (\ref{CONVJ1*}),
$$
|\nabla \mathcal{H}(u, s) |_{2}\rightarrow +\infty \quad \mbox{as} \quad s \to +\infty.
$$
On the other hand,
$$
J(\mathcal{H}(u, s)) \leq \frac{1}{2}|\nabla \mathcal{H}(u,s)|_2^2-\frac{\mu}{q}\int_{\mathbb{R}^N}|\mathcal{H}(u,s)|^{q}\,dx=\frac{e^{2s}}{2}\int_{\mathbb{R}^N}|\nabla u|^2 dx-\frac{\mu e^{\frac{(q-2)Ns}{2}}}{q}\int_{\mathbb{R}^N}|u(x)|^{q}\,dx.
$$
Since $q>2+\frac{4}{N}$, the last inequality yields
$$
J(\mathcal{H}(u, s))\rightarrow -\infty \quad \mbox{as} \quad s\rightarrow +\infty.
$$
\end{proof}

\begin{lemma}  \label{PJ1} There exists $K(a)>0$ small enough such that
	$$
	0<\sup_{u\in A} J(u)<\inf_{u\in B} J(u)
	$$
with
$$
A=\left\{u\in S(a), \int_{\mathbb{R}^N} |\nabla u|^2 dx\leq K(a) \right\}\quad \mbox{and} \quad B=\left\{u\in S(a), \int_{\mathbb{R}^N} |\nabla u|^2 dx=2K(a) \right\}.
$$
\end{lemma}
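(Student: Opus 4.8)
The plan is to reduce everything to the single scalar quantity $t:=\int_{\mathbb{R}^N}|\nabla u|^2\,dx$ and to exploit the fact that, once we are constrained to $S(a)$, both terms of $F$ are \emph{superlinear} in $t$. On $S(a)$ the $L^2$-norm is frozen at $a$, so the Gagliardo--Nirenberg inequality controls the subcritical term $|u|_q^q$ and the Sobolev inequality \eqref{Sp*} controls the critical term $|u|_{2^*}^{2^*}$, each purely in terms of $t$. Since $F\ge 0$, one trivially gets the upper bound $J(u)\le \tfrac12 t$, which will handle the set $A$; the lower bound on $B$ is where the estimates are needed.

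Concretely, for $u\in S(a)$ I would record
$$
\int_{\mathbb{R}^N}F(u)\,dx=\frac{\mu}{q}|u|_q^q+\frac{1}{2^*}|u|_{2^*}^{2^*}\le C_1\,t^{\alpha}+C_2\,t^{\beta},
$$
where $\alpha=\tfrac{q\gamma_q}{2}$ with $\gamma_q=\tfrac{N(q-2)}{2q}$, and $\beta=\tfrac{2^*}{2}$; here $C_1$ depends on the fixed data $N,q,\mu,a$ (it absorbs the Gagliardo--Nirenberg constant and the frozen factor $a^{q(1-\gamma_q)}$) and $C_2=\tfrac{1}{2^*}S^{-2^*/2}$ comes from \eqref{Sp*}. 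The decisive point, and the only place the hypothesis $q>2+\tfrac{4}{N}$ enters, is that both exponents strictly exceed $1$: indeed $q\gamma_q=\tfrac{N(q-2)}{2}>2$ exactly when $q>2+\tfrac4N$, so $\alpha>1$, while $\beta=\tfrac{N}{N-2}>1$ holds automatically. Hence
$$
J(u)\ge \tfrac12 t-C_1 t^{\alpha}-C_2 t^{\beta}=t\Big(\tfrac12-C_1 t^{\alpha-1}-C_2 t^{\beta-1}\Big),
$$
and the bracket tends to $\tfrac12$ as $t\to 0^+$.

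Then I would fix $K=K(a)$ by a two-step smallness choice. For the set $A$: since no nonzero element of $S(a)$ has vanishing gradient, every $u\in A$ has $0<t\le K$; choosing $K$ so small that $C_1K^{\alpha-1}+C_2K^{\beta-1}<\tfrac12$ forces $J(u)>0$ on all of $A$, giving $\sup_A J>0$, while $J(u)\le\tfrac12 t\le\tfrac12 K$ gives $\sup_A J\le\tfrac12 K$. For the set $B$: every $u\in B$ has $t=2K$, so $J(u)\ge K-C_1(2K)^{\alpha}-C_2(2K)^{\beta}$; shrinking $K$ further so that $C_1(2K)^{\alpha}+C_2(2K)^{\beta}\le\tfrac14 K$ yields $\inf_B J\ge\tfrac34 K$. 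Combining, $\sup_A J\le\tfrac12 K<\tfrac34 K\le\inf_B J$, which is exactly the assertion. Nonemptiness of $A$ and $B$ is guaranteed by the scaling $\mathcal{H}(u,s)$ of Lemma~\ref{vicentej}, since $|\nabla\mathcal{H}(u,s)|_2^2=e^{2s}|\nabla u|_2^2$ realizes every value in $(0,\infty)$ along $S(a)$.

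I do not expect a serious obstacle here: the argument is essentially the elementary analysis of $h(t)=\tfrac12 t-C_1t^{\alpha}-C_2t^{\beta}$ near $t=0$. The only genuine subtlety is producing the \emph{strict} separation $\sup_A J<\inf_B J$ rather than a mere inequality $\le$, and this is precisely why one compares the two level sets $\{t\le K\}$ and $\{t=2K\}$, which sit at a fixed ratio apart, and uses the superlinearity ($\alpha,\beta>1$) to make the nonlinear corrections a controllably small fraction of the linear gain $K$.
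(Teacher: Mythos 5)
Your proposal is correct and follows essentially the same route as the paper: Gagliardo--Nirenberg (equivalently, Sobolev for the $2^*$ term) reduces $\int F(u)$ to powers of $t=|\nabla u|_2^2$ with exponents $N(q-2)/4>1$ (using $q>2+\tfrac4N$) and $\tfrac{N}{N-2}>1$, after which a smallness choice of $K(a)$ gives both the positivity on $A$ and the strict gap between $A$ and $B$. The only cosmetic difference is that the paper bounds $J(v)-J(u)\geq\tfrac12K(a)-C_3K(a)^{N(q-2)/4}-C_4K(a)^{N(2^*-2)/4}>0$ directly (handling positivity on $A$ in a separate corollary), whereas you split the estimate into $\sup_A J\le\tfrac12K$ and $\inf_B J\ge\tfrac34K$.
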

\begin{proof}
We will need the following Gagliardo-Nirenberg inequality: for any $\xi> 2$,
$$
|u|_\xi\leq C(\xi, N)|\nabla u|_2^{\gamma}|u|_2^{1-\gamma},
$$
where $\gamma=N(\frac{1}{2}-\frac{1}{\xi})$. If we fix $|\nabla u|^{2}_{2}\leq K(a)$ and $|\nabla v|^{2}_2=2K(a)$, we derive that
$$
\int_{\mathbb{R}^N}F(u)\,dx \leq C_1|u|^{q}_{q}+C_2|u|^{2^*}_{2^*}.
$$
Then, by the Gagliardo-Nirenberg  inequality,
$$
\int_{\mathbb{R}^N}F(v)\,dx \leq C_1(|\nabla v|_2^2)^{N(\frac{q-2}{4})}+C_2(|\nabla v|_2^2)^{N(\frac{2^*-2}{4})}.
$$
Since  $F(u)\geq 0$ for any $u\in H^{1}(\mathbb{R}^{N})$, we have
\begin{eqnarray*}
J(v)-J(u) &=&\frac{1}{2}\int_{\mathbb{R}^N}|\nabla v|^{2}\,dx-\frac{1}{2}\int_{\mathbb{R}^N}|\nabla u|^{2}\,dx-\int_{\mathbb{R}^N}F(v)\,dx+\int_{\mathbb{R}^N}F(u)\,dx\\
&\geq &\frac{1}{2}\int_{\mathbb{R}^N}|\nabla v|^{2}\,dx-\frac{1}{2}\int_{\mathbb{R}^N}|\nabla u|^{2}\,dx-\int_{\mathbb{R}^N}F(v)\,dx,
\end{eqnarray*}
and so,
$$
J(v)-J(u) \geq \frac{1}{2}K(a)-C_3K(a)^{N(\frac{q-2}{4})}-C_4 K(a)^{N(\frac{2^*-2}{4})}.
$$
Thereby, fixing $K(a)$ small enough of such way that,
$$
\frac{1}{2}K(a)-C_3K(a)^{N(\frac{q-2}{4})}-C_4 K(a)^{N(\frac{2^*-2}{4})}>0,
$$
we get the desired result.
\end{proof}

As a byproduct of the last lemma is the following corollary.
\begin{corollary} There exists $K(a)>0$ such that if $u \in S(a)$ and $|\nabla u|^2_{2}\leq K(a)$, then $J(u) >0$.
\end{corollary}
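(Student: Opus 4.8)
The plan is to run the one-sided estimate from the proof of Lemma \ref{PJ1} directly on $J(u)$, rather than on the difference $J(v)-J(u)$. Fix $u\in S(a)$ and set $t:=\int_{\mathbb{R}^N}|\nabla u|^2\,dx$. Writing $J(u)=\frac12 t-\int_{\mathbb{R}^N}F(u)\,dx$ and bounding $F(t)\leq C_1|t|^q+C_2|t|^{2^*}$, I would estimate the potential term by the Gagliardo--Nirenberg inequality for the $L^q$-norm (using the constraint $|u|_2=a$) and by the Sobolev inequality \eqref{Sp*} for the $L^{2^*}$-norm. As computed in Lemma \ref{PJ1}, this gives
$$
J(u)\geq \frac12 t-C_3\, t^{N\left(\frac{q-2}{4}\right)}-C_4\, t^{N\left(\frac{2^*-2}{4}\right)}.
$$

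The decisive observation is that both exponents are strictly larger than $1$: one has $N\left(\frac{q-2}{4}\right)>1$ exactly because $q>2+\frac{4}{N}$, and $N\left(\frac{2^*-2}{4}\right)=\frac{N}{N-2}>1$ for $N\geq 3$. Factoring out $t$ yields
$$
J(u)\geq t\left(\frac12-C_3\, t^{N\left(\frac{q-2}{4}\right)-1}-C_4\, t^{N\left(\frac{2^*-2}{4}\right)-1}\right),
$$
and the bracketed function is continuous and decreasing in $t$, tending to $\frac12>0$ as $t\to 0^+$. Hence there exists $K(a)>0$ (one may take the same $K(a)$ as in Lemma \ref{PJ1}, or shrink it) such that the bracket remains positive on the interval $(0,K(a)]$.

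To conclude I would note that every $u\in S(a)$ has $t>0$: if $\nabla u=0$ a.e.\ then $u$ is constant, and the only constant function in $L^2(\mathbb{R}^N)$ is $0$, which contradicts $|u|_2=a>0$. Therefore, whenever $u\in S(a)$ satisfies $\int_{\mathbb{R}^N}|\nabla u|^2\,dx\leq K(a)$, we have $t\in(0,K(a)]$, the bracket above is strictly positive, and consequently $J(u)>0$.

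There is no genuine obstacle in this argument; it is a direct computation reusing the estimate already established for Lemma \ref{PJ1}. The only point requiring mild care is securing the \emph{strict} inequality $J(u)>0$, which relies on two facts working together: the strict positivity $t>0$ on $S(a)$, and that both exponents $N\left(\frac{q-2}{4}\right)$ and $N\left(\frac{2^*-2}{4}\right)$ strictly exceed $1$---that is, the $L^2$-supercritical character of the $|t|^q$ term ($q>2+\frac4N$) together with the critical character of the $|t|^{2^*}$ term.
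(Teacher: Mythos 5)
Your proposal is correct and follows essentially the same route as the paper: the paper's own proof of this corollary is exactly the one-sided Gagliardo--Nirenberg/Sobolev estimate $J(u)\geq \frac{1}{2}|\nabla u|_{2}^{2}-C_1|\nabla u|_2^{N(\frac{q-2}{2})}-C_2|\nabla u|_2^{N(\frac{2^*-2}{2})}>0$ for $K(a)$ small, which is your factored inequality in the variable $t=|\nabla u|_2^2$. Your extra remark that $t>0$ on $S(a)$ (so the inequality is strict) is a small point of care the paper leaves implicit.
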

\begin{proof} Arguing as in the last lemma,
$$
J(u) \geq \frac{1}{2}|\nabla u|_{2}^{2}-C_1|\nabla u|_2^{N(\frac{q-2}{2})}-C_2|\nabla u|_2^{N(\frac{2^*-2}{2})}>0,
$$
for $K(a)$ small enough.	
\end{proof}

In what follows,  we fix $u_0 \in S(a)$ and apply Lemma \ref{vicentej} to get two numbers $s_1<0$ and $s_2>0$, of such way that the functions $u_1=\mathcal{H}(s_1,u_0)$ and $u_2=\mathcal{H}(s_2,u_0)$ satisfy
$$
|\nabla u_1|^2_2<\frac{K(a)}{2}, \,\, |\nabla u_2|_2^2>2K(a),\,\, J(u_1)>0\quad \mbox{and} \quad J(u_2)<0.
$$

Now, following the ideas from Jeanjean \cite{jeanjean1}, we fix the following mountain pass level given by
$$
\gamma_\mu(a)=\inf_{h \in \Gamma}\max_{t \in [0,1]}J(h(t))
$$
where
$$
\Gamma=\left\{h \in C([0,1],S(a)): h(0)=u_1 \,\,\mbox{and} \,\, h(1)=u_2 \right\}.
$$
From Lemma \ref{PJ1},
$$
\max_{t \in [0,1]}J(h(t))>\max \left\{J(u_1),J(u_2)\right\}.
$$

\begin{lemma} \label{ESTMOUNTPASS} There holds $\displaystyle \lim_{\mu \to +\infty}\gamma_\mu(a)=0$.

\end{lemma}
\begin{proof} In what follows we set the path $h_0(t)=\mathcal{H}((1-t)s_1+ts_2,u_0) \in \Gamma$. Then,
$$
\gamma_\mu(a) \leq \max_{t \in [0,1]}J(h_0(t)) \leq \max_{r \geq 0}\left\{\frac{r^2}{2}|\nabla u_0|_{2}^{2}-\frac{\mu}{q}r^{\frac{N(q-2)}{2}}|u_0|_{q}^{q}\right\},
$$
and so, for some positive constant $C_2$,
$$
\gamma_\mu(a) \leq C_2\left(\frac{1}{\mu}\right)^{\frac{4}{N(q-2)-4}} \to 0 \quad \mbox{as} \quad \mu \to +\infty.
$$
Here, we have used the fact that $q>2+\frac{4}{N}$.

\end{proof}

In what follows $(u_n)$ denotes the $(PS)$ sequence associated with the level $\gamma_\mu(a)$, which is  obtained by making $u_n=\mathcal{H}(v_n,s_n)$,  where $(v_n,s_n)$ is the $(PS)$ sequence for $ \tilde{J} $ obtained by \cite[Proposition 2.2 ]{jeanjean1}, associated with the level $\gamma_\mu(a).$ More precisely, we have
\begin{equation} \label{gamma(a)}
J(u_n) \to \gamma_\mu(a) \quad \mbox{as} \quad n \to +\infty,
\end{equation}
and
\begin{equation*} \label{der1}
\|J|'_{S(a)}(u_n)\| \to 0 \quad \mbox{as} \quad n \to +\infty.
\end{equation*}
Setting the functional $\Psi:H^{1}(\mathbb{R}^N) \to \mathbb{R}$ given by
$$
\Psi(u)=\frac{1}{2}\int_{\mathbb{R}^N}|u|^2\,dx,
$$
it follows that $S(a)=\Psi^{-1}(\{a^2/2\})$. Then, by Willem \cite[Proposition 5.12]{Willem}, there exists $(\lambda_n) \subset \mathbb{R}$ such that
$$
||J'(u_n)-\lambda_n\Psi'(u_n)||_{H^{-1}} \to 0 \quad \mbox{as} \quad n \to +\infty.
$$
Hence,
\begin{equation} \label{EQ10}
	-\Delta u_n-f(u_n)=\lambda_nu_n\ + o_n(1) \quad \mbox{in} \quad (H^{1}(\mathbb{R}^N))^*.
\end{equation}
Moreover, another important limit involving the sequence $(u_n)$ is
\begin{equation} \label{EQ1**}
Q(u_n)=\int_{\mathbb{R}^N}|\nabla u_n|^2 dx +N \int_{\mathbb{R}^N} F(u_n) dx- \frac{N}{2}\int_{\mathbb{R}^N} f(u_n) u_n dx\to 0 \quad \mbox{as} \quad n \to +\infty,
\end{equation}
which is obtained using the limit below
$$
{\partial_s}\tilde{J}(v_n,s_n) \to 0 \quad \mbox{as} \quad n \to +\infty,
$$
that was also proved in \cite{jeanjean1}.

Arguing as in \cite[Lemmas 2.3 and 2.4]{jeanjean1}, we know that $(u_n)$ is a bounded sequence, and so, the number $\lambda_n$  must satisfy the equality below
$$
\lambda_n=\frac{1}{|u_n|^{2}_{2}}\left\{ |\nabla u_n|^{2}_{2} -\int_{\mathbb{R}^N} f(u_n) u_n dx \right\}+o_n(1),
$$
or equivalently,
\begin{equation} \label{lambdan}
	\lambda_n=\frac{1}{a^{2}}\left\{ |\nabla u_n|^{2}_{2} -\int_{\mathbb{R}^N} f(u_n) u_n dx \right\}+o_n(1).
\end{equation}

\begin{lemma} \label{Limitacao}There exists $C>0$ such that
$$
\limsup_{n \to +\infty}\int_{\mathbb{R}^N}F(u_n)\,dx \leq C \gamma_\mu(a)
$$
and
$$
\limsup_{n \to +\infty}\int_{\mathbb{R}^N}f(u_n)u_n\,dx \leq C\gamma_\mu(a).
$$
\end{lemma}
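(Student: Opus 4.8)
The plan is to combine the two asymptotic relations already in hand—namely $J(u_n)\to\gamma_\mu(a)$ from \eqref{gamma(a)} and $Q(u_n)\to 0$ from \eqref{EQ1**}—in a linear combination that cancels the gradient term $\int_{\mathbb{R}^N}|\nabla u_n|^2\,dx$, leaving only the two nonlinear pieces. To organize the bookkeeping I would set
$$
P_n=\mu\int_{\mathbb{R}^N}|u_n|^q\,dx\ge 0 \quad\mbox{and}\quad R_n=\int_{\mathbb{R}^N}|u_n|^{2^*}\,dx\ge 0,
$$
so that $\int_{\mathbb{R}^N}F(u_n)\,dx=\frac{1}{q}P_n+\frac{1}{2^*}R_n$ and $\int_{\mathbb{R}^N}f(u_n)u_n\,dx=P_n+R_n$. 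The whole statement then reduces to bounding $P_n+R_n$ by a constant multiple of $\gamma_\mu(a)$ in the limit.

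First I would use the definition of $J$ to write $\int_{\mathbb{R}^N}|\nabla u_n|^2\,dx=2J(u_n)+2\int_{\mathbb{R}^N}F(u_n)\,dx$ and substitute this into the expression for $Q(u_n)$. Collecting the $P_n$ and $R_n$ terms yields
$$
Q(u_n)-2J(u_n)=\Big(\tfrac{N+2}{q}-\tfrac{N}{2}\Big)P_n+\Big(\tfrac{N+2}{2^*}-\tfrac{N}{2}\Big)R_n .
$$

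The decisive step is the sign analysis of the two coefficients, and this is exactly where the hypotheses enter. Since $q>2+\frac{4}{N}=\frac{2(N+2)}{N}$ we have $\frac{N+2}{q}<\frac{N}{2}$, so the coefficient of $P_n$ is a strictly negative constant $c_q$ depending only on $N$ and $q$; and using $2^*=\frac{2N}{N-2}$ one computes $\frac{N+2}{2^*}-\frac{N}{2}=\frac{N^2-4}{2N}-\frac{N^2}{2N}=-\frac{2}{N}$, again strictly negative. Putting $m=\min\{|c_q|,\tfrac{2}{N}\}>0$ and letting $n\to+\infty$, the left-hand side tends to $0-2\gamma_\mu(a)$, so that $m(P_n+R_n)\le |c_q|P_n+\frac{2}{N}R_n\to 2\gamma_\mu(a)$, and hence $\limsup_{n}(P_n+R_n)\le \frac{2}{m}\gamma_\mu(a)$.

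Finally, the second claimed inequality is immediate with $C=\frac{2}{m}$, since $\int_{\mathbb{R}^N}f(u_n)u_n\,dx=P_n+R_n$; and the first follows from $\int_{\mathbb{R}^N}F(u_n)\,dx=\frac{1}{q}P_n+\frac{1}{2^*}R_n\le\frac{1}{q}(P_n+R_n)$ (using $q<2^*$), giving $\limsup_{n}\int_{\mathbb{R}^N}F(u_n)\,dx\le\frac{2}{mq}\gamma_\mu(a)\le C\gamma_\mu(a)$. I do not anticipate a serious obstacle here: the argument is self-contained once the identity above is written down, the quantities $P_n,R_n$ are manifestly nonnegative because $\mu>0$, and the only point genuinely requiring care is verifying that both coefficients are negative—which rests precisely on the $L^2$-supercritical condition $q>2+\frac{4}{N}$ and on the criticality of the exponent $2^*$.
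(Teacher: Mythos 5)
Your proposal is correct and follows essentially the same route as the paper: the identity you derive for $Q(u_n)-2J(u_n)$ is exactly the paper's relation $-(N+2)\int_{\mathbb{R}^N}F(u_n)\,dx+\frac{N}{2}\int_{\mathbb{R}^N}f(u_n)u_n\,dx=2\gamma_\mu(a)+o_n(1)$, and your sign analysis of the coefficients of $P_n$ and $R_n$ is the same arithmetic fact the paper invokes via $qF(t)\leq f(t)t$ together with $q>2+\frac{4}{N}$. The only cosmetic difference is that you extract both bounds simultaneously from the explicit $(P_n,R_n)$ decomposition, whereas the paper first bounds $\int F(u_n)$ and then feeds that back into the identity to bound $\int f(u_n)u_n$.
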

\begin{proof} From (\ref{gamma(a)}) and (\ref{EQ1**})
$$
N{J}(u_n)+Q(u_n)=N\gamma_\mu(a)+o_n(1),
$$
then
$$
\frac{N+2}{2}\int_{\mathbb{R}^N}|\nabla u_n|^2 dx - \frac{N}{2}\int_{\mathbb{R}^N} f(u_n) u_n dx=N\gamma_\mu(a)+o_n(1).
$$
Using again (\ref{gamma(a)}), we get
$$
\frac{N+2}{2}\left(2\int_{\mathbb{R}^N} F(u_n) dx+ 2\gamma_\mu(a)+o_n(1)\right) - \frac{N}{2}\int_{\mathbb{R}^N} f(u_n) u_n dx = N\gamma_\mu(a)+o_n(1),
$$
that is,
\begin{equation}\label{Ee}
-(N+2)\int_{\mathbb{R}^N} F(u_n) dx+ \frac{N}{2}\int_{\mathbb{R}^N} f(u_n) u_n dx = 2\gamma_\mu(a)+o_n(1).
\end{equation}
Since $ q \in (2+\frac{4}{N}, 2^{*})$ and  $
F(t)=\frac{\mu}{q}|t|^q+\frac{1}{2^*}|t|^{2^*}, \,\, \forall \, t \in \mathbb{R}^N, $ we obtain
\begin{equation} \label{q}
qF(t) \leq f(t)t, \,\, t \in\mathbb{R}.
\end{equation}
This together with (\ref{Ee}) yields
$$
\left(\frac{qN}{2} -(N+2)\right)\int_{\mathbb{R}^N} F(u_n) dx\leq 2\gamma_\mu(a)+o_n(1),
$$
and so,
$$
\limsup_{n \to +\infty}\int_{\mathbb{R}^N}F(u_n)\,dx \leq C \gamma_\mu(a).
$$
This inequality combined again with  (\ref{Ee}) ensures that
$$
\limsup_{n \to +\infty}\int_{\mathbb{R}^N}f(u_n)u_n\,dx \leq C \gamma_\mu(a).
$$
\end{proof}

\begin{lemma} \label{goodest0} $\displaystyle \limsup_{n \to +\infty}|\nabla u_n|_{2}^{2} \leq C \gamma_\mu(a)$.
	
\end{lemma}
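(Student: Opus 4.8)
The plan is to read the bound off directly from the definition of $J$ together with the first estimate already established in Lemma \ref{Limitacao}, so almost no new work is required. Recall that by definition
$$
J(u_n)=\frac{1}{2}\int_{\mathbb{R}^N}|\nabla u_n|^2\,dx-\int_{\mathbb{R}^N}F(u_n)\,dx,
$$
which I would immediately rearrange into
$$
\frac{1}{2}\int_{\mathbb{R}^N}|\nabla u_n|^2\,dx=J(u_n)+\int_{\mathbb{R}^N}F(u_n)\,dx.
$$
This expresses the kinetic energy purely in terms of two quantities whose asymptotic behaviour we already control.

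Next I would pass to the limit superior as $n\to+\infty$. The first term on the right converges: by \eqref{gamma(a)} we have $J(u_n)\to\gamma_\mu(a)$. Since a convergent sequence can be pulled out of the $\limsup$, we obtain
$$
\frac{1}{2}\limsup_{n\to+\infty}\int_{\mathbb{R}^N}|\nabla u_n|^2\,dx\leq \gamma_\mu(a)+\limsup_{n\to+\infty}\int_{\mathbb{R}^N}F(u_n)\,dx.
$$
Then the first inequality of Lemma \ref{Limitacao}, namely $\limsup_{n}\int_{\mathbb{R}^N}F(u_n)\,dx\leq C\gamma_\mu(a)$, yields
$$
\frac{1}{2}\limsup_{n\to+\infty}|\nabla u_n|_2^2\leq (1+C)\gamma_\mu(a),
$$
and after absorbing the factor $2(1+C)$ into a single positive constant (again denoted $C$), this is exactly the claimed estimate $\limsup_{n}|\nabla u_n|_2^2\leq C\gamma_\mu(a)$.

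I do not expect any genuine obstacle here. The only point deserving a word of care is the superadditivity/subadditivity of $\limsup$, but since $J(u_n)$ is actually convergent this reduces to the elementary fact $\limsup_n(a_n+b_n)=\lim_n a_n+\limsup_n b_n$ whenever $(a_n)$ converges. Moreover, because $F(t)=\frac{\mu}{q}|t|^q+\frac{1}{2^*}|t|^{2^*}\geq 0$, every integral appearing is nonnegative, so there is no sign issue and no cancellation to monitor. In short, the lemma is a direct algebraic consequence of the definition of $J$, the convergence in \eqref{gamma(a)}, and the integral bound already proved in Lemma \ref{Limitacao}.
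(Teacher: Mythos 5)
Your argument is correct and is essentially identical to the paper's own proof, which likewise writes $|\nabla u_n|_2^2 = 2\gamma_\mu(a) + 2\int_{\mathbb{R}^N}F(u_n)\,dx + o_n(1)$ from the definition of $J$ and the convergence $J(u_n)\to\gamma_\mu(a)$, and then invokes the first estimate of Lemma \ref{Limitacao}. Nothing is missing.
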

\begin{proof} First of all, let us recall that
	$$
	\int_{\mathbb{R}^N}|\nabla u_n|^2\,dx=2\gamma_\mu(a)+2\int_{\mathbb{R}^N}F(u_n)\,dx+o_n(1).
	$$
	Then, from Lemma \ref{Limitacao},
	$$
	\limsup_{n \to +\infty}|\nabla u_n|_{2}^{2}  \leq (2+C_{1})\gamma_\mu(a).
	$$
\end{proof}

Now, from (\ref{Ee}), the sequence $(\int_{\mathbb{R}^N}F(u_n)\,dx) $ is bounded away from zero, otherwise we would have
$$
\int_{\mathbb{R}^N}F(u_n)\,dx \to 0 \quad \mbox{as} \quad n \to +\infty,
$$
which leads to
$$
\int_{\mathbb{R}^N}f(u_n)u_n\,dx \to 0 \quad \mbox{as} \quad n \to +\infty.
$$
These limits combined with (\ref{Ee}) imply that $\gamma_\mu(a)=0$, which is absurd. From this, in what follows we can assume that
\begin{equation} \label{Fc1}
\int_{\mathbb{R}^N}F(u_n)\,dx\rightarrow C_1>0,\ \mbox{as}\ n \rightarrow \infty.
\end{equation}

\begin{lemma} \label{mu0} The sequence $(\lambda_n)$ is bounded with
$$
\lambda_n = -\frac{\mu}{a^2}\Big(\frac{N}{q}-\frac{N-2}{2}\Big)\int_{\mathbb{R}^N}|u_n|^q \,dx+o_n(1)
$$
and
$$
\limsup_{n \to +\infty}|\lambda_n| \leq \frac{C}{a^2}\gamma_\mu(a),
$$
for some $C>0$.
\end{lemma}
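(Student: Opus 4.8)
The plan is to combine the two identities already at our disposal: the expression \eqref{lambdan} for $\lambda_n$ in terms of $|\nabla u_n|_2^2$ and $\int f(u_n)u_n$, and the Pohozaev-type relation \eqref{EQ1**}. First I would use \eqref{EQ1**} to eliminate the gradient term, writing $\int_{\mathbb{R}^N}|\nabla u_n|^2\,dx = -N\int_{\mathbb{R}^N}F(u_n)\,dx + \frac{N}{2}\int_{\mathbb{R}^N}f(u_n)u_n\,dx + o_n(1)$ and substituting into \eqref{lambdan}. After cancelling the $\int f(u_n)u_n$ contributions this gives
$$
\lambda_n = \frac{1}{a^2}\left(-N\int_{\mathbb{R}^N}F(u_n)\,dx + \frac{N-2}{2}\int_{\mathbb{R}^N}f(u_n)u_n\,dx\right) + o_n(1).
$$

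Next I would insert the explicit forms $F(t)=\frac{\mu}{q}|t|^q+\frac{1}{2^*}|t|^{2^*}$ and $f(t)t=\mu|t|^q+|t|^{2^*}$ and collect the $|u_n|^q$ and $|u_n|^{2^*}$ contributions separately. The decisive point, which I expect to carry the whole lemma, is that the coefficient of $\int_{\mathbb{R}^N}|u_n|^{2^*}\,dx$ equals $-\frac{N}{2^*}+\frac{N-2}{2}$, and this vanishes identically because $2^*=\frac{2N}{N-2}$ forces $\frac{N}{2^*}=\frac{N-2}{2}$. Thus the critical term disappears and only the subcritical term survives, leaving the coefficient $-\mu\left(\frac{N}{q}-\frac{N-2}{2}\right)$ in front of $\int_{\mathbb{R}^N}|u_n|^q\,dx$, which is exactly the claimed formula. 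Conceptually this is the heart of the matter: the Lagrange multiplier turns out to depend only on the $L^2$-supercritical but Sobolev-subcritical term, a fact that makes it controllable.

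With the formula in hand, the estimate follows quickly. Since $q<2^*$ we have $\frac{N}{q}-\frac{N-2}{2}>0$, so the coefficient is a fixed positive constant. Because every term of $F$ is nonnegative, $\frac{\mu}{q}|u_n|^q\le F(u_n)$ pointwise, whence $\mu\int_{\mathbb{R}^N}|u_n|^q\,dx\le q\int_{\mathbb{R}^N}F(u_n)\,dx$. Taking $\limsup$ and invoking Lemma \ref{Limitacao} bounds $\mu\int_{\mathbb{R}^N}|u_n|^q\,dx$ by $Cq\,\gamma_\mu(a)$, which yields $\limsup_{n\to+\infty}|\lambda_n|\le \frac{C}{a^2}\gamma_\mu(a)$ after absorbing constants. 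Boundedness of $(\lambda_n)$ is then automatic, since $(u_n)$ is bounded in $H^1(\mathbb{R}^N)$ (hence $\int_{\mathbb{R}^N}|u_n|^q\,dx$ is bounded by the Gagliardo--Nirenberg inequality used in Lemma \ref{PJ1}) and $\lambda_n$ is, up to $o_n(1)$, a fixed multiple of that quantity.
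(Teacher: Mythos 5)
Your proposal is correct and follows essentially the same route as the paper: substitute the Pohozaev-type relation \eqref{EQ1**} into \eqref{lambdan} to eliminate the gradient term, observe that the coefficient of $\int|u_n|^{2^*}$ vanishes because $\frac{N}{2^*}=\frac{N-2}{2}$, and read off the formula for $\lambda_n$. The only (immaterial) difference is that the paper obtains the bound $\limsup|\lambda_n|\le \frac{C}{a^2}\gamma_\mu(a)$ directly from \eqref{lambdan} together with Lemmas \ref{Limitacao} and \ref{goodest0}, whereas you deduce it from the derived formula via $\mu\int|u_n|^q\,dx\le q\int F(u_n)\,dx$; both give the stated conclusion.
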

\begin{proof}  The boundedness of $(u_n)$ yields that $(\lambda_n)$ is bounded, because
	\begin{equation}\label{lambda}
	\lambda_n a^2=\lambda_n |u_n|^{2}_{2}=|\nabla u_n|_{2}^{2}-\int_{\mathbb{R}^N}f(u_n)u_n\,dx +o_n(1),
	\end{equation}
	and so,
	\begin{eqnarray*}
	|\lambda_n| &\leq&\frac{1}{a^2}\left( |\nabla u_n|_{2}^{2}+\int_{\mathbb{R}^N}f(u_n)u_n\,dx\right) + o_n(1) \\
	&\leq&\frac{C}{a^2}\gamma_\mu(a) + o_n(1).
	\end{eqnarray*}
This guarantees the boundedness of $(\lambda_n)$ and the second inequality is proved.
	
	In order to prove the first inequality, we know by (\ref{EQ1**}) that
	$$
	|\nabla u_n|_{2}^{2}=\frac{N}{2}\int_{\mathbb{R}^N}f(u_n)u_n\,dx  - N\int_{\mathbb{R}^N}F(u_n)\,dx + o_n(1).
	$$
	Inserting this equality in (\ref{lambda}), we obtain
	$$\lambda_n a^2= -\mu\Big(\frac{N}{q}-\frac{N-2}{2}\Big)\int_{\mathbb{R}^N}|u_n|^q \,dx +o_n(1),$$
showing the first inequality.
\end{proof}

In the sequel, we restrict our study to the space $H_{rad}^{1}(\mathbb{R}^N)$.  Then, it is well known that
\begin{equation} \label{convq}
\lim_{n \to +\infty}\int_{\mathbb{R}^N}|u_n|^q\,dx=\int_{\mathbb{R}^N}|u|^q\,dx,
\end{equation}
where $u_n \rightharpoonup u$ in $H_{rad}^{1}(\mathbb{R}^N)$, because $q \in (2+\frac{4}{N},2^*)$.
\begin{lemma} There exists $\mu^*>0$ such that $ u\not=0$ for all $\mu \geq \mu^*>0$.
\end{lemma}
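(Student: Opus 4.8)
The plan is to argue by contradiction: I would suppose $u=0$ and show that this forces the critical mass $\lim_n\int_{\mathbb{R}^N}|u_n|^{2^*}\,dx$ to be bounded below by the fixed number $S^{N/2}$, while the estimates already established force the same mass to be arbitrarily small once $\mu$ is large. Reconciling the two is what produces the threshold $\mu^*$.

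First I would invoke the compact-embedding fact recorded in (\ref{convq}): since $u_n\rightharpoonup u$ in $H^1_{rad}(\mathbb{R}^N)$ and $q\in(2+\frac{4}{N},2^*)$, the assumption $u=0$ gives $\int_{\mathbb{R}^N}|u_n|^q\,dx\to 0$. Recalling that $F(t)=\frac{\mu}{q}|t|^q+\frac{1}{2^*}|t|^{2^*}$ and using the normalization (\ref{Fc1}), namely $\int_{\mathbb{R}^N}F(u_n)\,dx\to C_1>0$, the vanishing of the $L^q$-term yields
$$
\frac{1}{2^*}\int_{\mathbb{R}^N}|u_n|^{2^*}\,dx\to C_1>0.
$$
Next I would exploit the Pohozaev-type limit (\ref{EQ1**}). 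Substituting $f(u_n)u_n=\mu|u_n|^q+|u_n|^{2^*}$ and the explicit $F$ into $Q(u_n)\to 0$, and using $N\big(\frac{1}{2^*}-\frac{1}{2}\big)=-1$, one obtains
$$
\int_{\mathbb{R}^N}|\nabla u_n|^2\,dx+N\mu\Big(\frac{1}{q}-\frac{1}{2}\Big)\int_{\mathbb{R}^N}|u_n|^q\,dx-\int_{\mathbb{R}^N}|u_n|^{2^*}\,dx\to 0.
$$
Since $\int_{\mathbb{R}^N}|u_n|^q\,dx\to 0$, this reduces to $\int_{\mathbb{R}^N}|\nabla u_n|^2\,dx-\int_{\mathbb{R}^N}|u_n|^{2^*}\,dx\to 0$.

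Writing $\ell:=\lim_n\int_{\mathbb{R}^N}|\nabla u_n|^2\,dx$ along a subsequence, the previous two displays give $\ell=\lim_n\int_{\mathbb{R}^N}|u_n|^{2^*}\,dx=2^*C_1>0$. The Sobolev inequality (\ref{Sp*}), in the form $S\big(\int_{\mathbb{R}^N}|u_n|^{2^*}\big)^{2/2^*}\le\int_{\mathbb{R}^N}|\nabla u_n|^2$, passes to the limit to yield $S\,\ell^{2/2^*}\le\ell$; since $\ell>0$ and $\frac{2^*-2}{2^*}=\frac{2}{N}$, this forces $\ell\ge S^{N/2}$. Finally I would contradict this with Lemma \ref{goodest0}, which gives $\ell=\limsup_n|\nabla u_n|_2^2\le C\gamma_\mu(a)$, combined with Lemma \ref{ESTMOUNTPASS}, which gives $\gamma_\mu(a)\to 0$ as $\mu\to+\infty$. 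Choosing $\mu^*>0$ so large that $C\gamma_\mu(a)<S^{N/2}$ for every $\mu\ge\mu^*$ produces $S^{N/2}\le\ell\le C\gamma_\mu(a)<S^{N/2}$, an absurdity; hence $u\neq 0$ for all $\mu\ge\mu^*$.

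The main obstacle is the loss of compactness at the critical exponent: unlike the subcritical term, $\int_{\mathbb{R}^N}|u_n|^{2^*}$ need not converge to $\int_{\mathbb{R}^N}|u|^{2^*}$ under mere weak convergence, so one cannot simply pass to the limit and conclude $u\neq 0$ directly. The Brezis--Nirenberg mechanism is what circumvents this: the Sobolev constant provides a universal energy barrier $S^{N/2}$ below which no nontrivial concentration of critical mass can occur, and the smallness of the mountain-pass level $\gamma_\mu(a)$ for large $\mu$ pushes the total available energy strictly beneath that barrier, ruling out the vanishing scenario.
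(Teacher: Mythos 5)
Your proof is correct and follows essentially the same strategy as the paper: assume $u=0$, use the compact embedding to kill the $L^q$ term, deduce that $\lim_n|\nabla u_n|_2^2=\lim_n|u_n|_{2^*}^{2^*}=L>0$, invoke the Sobolev constant to force $L\ge S^{N/2}$, and contradict this with the bound $\limsup_n|\nabla u_n|_2^2\le C\gamma_\mu(a)$ together with $\gamma_\mu(a)\to 0$ as $\mu\to+\infty$. The only (harmless) variations are that you reach $|\nabla u_n|_2^2-|u_n|_{2^*}^{2^*}=o_n(1)$ via the Pohozaev-type limit $Q(u_n)\to 0$ and get $L>0$ from \eqref{Fc1}, whereas the paper routes through $\lambda_n\to 0$ and the fact that $\gamma_\mu(a)>0$; both sets of ingredients are already established at that point.
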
	
\begin{proof}
Seeking for a contradiction, let us assume that $u=0$. Then,
\begin{equation} \label{EQq}
\lim_{n \to +\infty}\int_{\mathbb{R}^N}|u_n|^q\,dx=0,
\end{equation}
and by Lemma \ref{mu0},
\begin{equation} \label{ln0}
\limsup_{n \to +\infty} \lambda_n = 0.
\end{equation}
The equality
\begin{equation*}
a^2\lambda_n=|\nabla u_n|^{2}_{2}-\int_{\mathbb{R}^N}f(u_n)u_n\,dx+o_n(1)
\end{equation*}
together with (\ref{EQq}) and (\ref{ln0}) leads to
\begin{equation}  \label{mu2}
|\nabla u_n|_{2}^{2}-|u_n|_{2^*}^{2^*}=o_n(1).
\end{equation}
In what follows, going to a subsequence, we assume that
$$
|\nabla u_n|_{2}^{2}=L+o_n(1) \quad \mbox{and} \quad |u_n|_{2^*}^{2^*}=L+o_n(1).
$$
We claim that $L>0$, otherwise if $L=0$, we must have
\begin{equation}  \label{mu2}
|\nabla u_n|_{2}^{2}=o_n(1),
\end{equation}
then,
$$
|\nabla u_n|^{2}_{2} \to 0,
$$
which is absurd, because $\gamma_\mu(a)>0$.

Since $L>0$, by definition of $S$ in \eqref{Sp*},
$$
S \leq \frac{ \int_{\mathbb{R}^N}|\nabla u_n|^{2}\,dx}{\left(\int_{\mathbb{R}^N}|u_n|^{2^*}\,dx\right)^{\frac{2}{2^*}}}.
$$
Taking the $limsup$ as $n \to +\infty$, we obtain
$$
S \leq \frac{ L}{L^{\frac{2}{2^*}}},
$$
that is,
$$
L \geq S^{\frac{N}{2}}.
$$
On the other hand
$$
o_n(1)+\gamma_\mu(a)-\frac{a^2\lambda_n}{2}=\frac{1}{2}(|\nabla u_n|_{2}^{2}-a^2\lambda_n )-\frac{\mu}{q}|u_n|^{q}_{q}-\frac{1}{2^*}|u_n|^{2^*}_{2^*}=\frac{1}{N}L+o_n(1).
$$
Recalling that $\displaystyle \limsup_{n \to +\infty}|\lambda_n| \leq \frac{C}{a^2}\gamma_\mu(a)$, it follows that
$$
\frac{1}{N}S^{\frac{N}{2}}\leq C\gamma_\mu(a).
$$
Now, fixing $\mu^*$ large enough of a such way that
$$
C\gamma_\mu(a) < \frac{1}{N}S^{\frac{N}{2}}, \quad \forall \mu \geq \mu^*,
$$
we get a new contradiction. This proves that $u\not=0$ for $\mu>0$ large enough.

\end{proof}

\begin{lemma} \label{crescimento} Increasing if necessary $\mu^*$, we have $u_n \to u$ in $L^{2^*}(\mathbb{R}^N)$ for all $\mu \geq \mu^*$.
	
\end{lemma}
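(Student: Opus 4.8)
The plan is to perform a Brezis--Lieb splitting of the bounded sequence $(u_n)$ in $H^{1}_{rad}(\mathbb{R}^N)$ and to rule out the loss of mass in $L^{2^*}$ by exploiting that $\gamma_\mu(a)\to 0$ as $\mu\to+\infty$. Passing to a subsequence, I assume $u_n\rightharpoonup u$ in $H^{1}_{rad}(\mathbb{R}^N)$, $u_n\to u$ a.e. and in $L^q(\mathbb{R}^N)$ (by \eqref{convq}), $\lambda_n\to\lambda$ (the sequence is bounded by Lemma \ref{mu0}), and the limits $A:=\lim|\nabla w_n|_2^2$, $B:=\lim|w_n|_{2^*}^{2^*}$, $m:=\lim|w_n|_2^2$ all exist, where $w_n:=u_n-u$. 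The goal is exactly to prove $B=0$, which is precisely $u_n\to u$ in $L^{2^*}(\mathbb{R}^N)$.

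First I would identify the weak limit. Letting $n\to+\infty$ in \eqref{EQ10}, using $u_n\to u$ in $L^q$, the weak convergence $|u_n|^{2^*-2}u_n\rightharpoonup|u|^{2^*-2}u$ in $L^{(2^*)'}(\mathbb{R}^N)$ and $u_n\rightharpoonup u$ in $L^2$, one gets that $u$ is a weak solution of $-\Delta u-f(u)=\lambda u$; testing this with $u$ yields $|\nabla u|_2^2-\int_{\mathbb{R}^N} f(u)u\,dx=\lambda|u|_2^2$. Testing \eqref{EQ10} with $u_n$ and subtracting the previous identity, then invoking Brezis--Lieb for the critical term, namely $|u_n|_{2^*}^{2^*}=|u|_{2^*}^{2^*}+|w_n|_{2^*}^{2^*}+o_n(1)$, together with the splitting of the gradient, the $L^q$-convergence of the subcritical term and $|u_n|_2^2=a^2$, I obtain the relation $A-B=\lambda m$. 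Since the Sobolev inequality gives $B^{2/2^*}\le S^{-1}A$, i.e. $A\ge S B^{2/2^*}$, and since $\lambda\le 0$ (by the formula in Lemma \ref{mu0}, because $\tfrac{N}{q}-\tfrac{N-2}{2}>0$ for $q<2^*$ and $\int_{\mathbb{R}^N}|u|^q\,dx>0$) together with $m\ge 0$ forces $A\le B$, I conclude the dichotomy: either $B=0$ or $B\ge S^{N/2}$.

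To exclude the second alternative I would compute the energy of the limit profile. Using $\int_{\mathbb{R}^N} F(u_n)\,dx\to\int_{\mathbb{R}^N} F(u)\,dx+\tfrac{B}{2^*}$ and $\tfrac12|\nabla u_n|_2^2\to\tfrac12|\nabla u|_2^2+\tfrac{A}{2}$, I find $\gamma_\mu(a)=J(u)+\tfrac{A}{2}-\tfrac{B}{2^*}=J(u)+\tfrac{B}{N}+\tfrac{\lambda m}{2}$ after substituting $A=B+\lambda m$. Writing $J(u)=\int_{\mathbb{R}^N}\big(\tfrac12 f(u)u-F(u)\big)\,dx+\tfrac{\lambda}{2}|u|_2^2$ and noting that $\tfrac12 f(t)t-F(t)=\mu\tfrac{q-2}{2q}|t|^q+\tfrac1N|t|^{2^*}\ge 0$, together with $|u|_2^2+m=a^2$, gives the lower bound $\gamma_\mu(a)\ge\tfrac{\lambda a^2}{2}+\tfrac{B}{N}$. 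Finally, the estimate $|\lambda|\le\tfrac{C}{a^2}\gamma_\mu(a)$ from Lemma \ref{mu0} yields $B\le C'\gamma_\mu(a)$ for a constant $C'>0$; hence if $B>0$ then $S^{N/2}\le B\le C'\gamma_\mu(a)$, which is impossible once $\mu^*$ is enlarged so that $C'\gamma_\mu(a)<S^{N/2}$ for all $\mu\ge\mu^*$ (Lemma \ref{ESTMOUNTPASS}). Therefore $B=0$, i.e. $u_n\to u$ in $L^{2^*}(\mathbb{R}^N)$.

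I expect the main obstacle to be the bookkeeping that produces the clean identity $A-B=\lambda m$ and the energy lower bound $\gamma_\mu(a)\ge\tfrac{\lambda a^2}{2}+\tfrac{B}{N}$: one must correctly separate the contribution of the weak limit $u$ from that of the vanishing tail $w_n$ in every quadratic and critical term, and carefully track the sign of $\lambda$ and the $\lambda m$ correction. Once these are in place, the Sobolev dichotomy and the smallness of $\gamma_\mu(a)$ for large $\mu$ close the argument immediately.
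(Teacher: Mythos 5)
Your proposal is correct, and it reaches the conclusion by a genuinely different route than the paper. The paper invokes Lions' concentration--compactness principle: it shows the atomic set $\mathcal{J}$ must be empty because each atom would force $\kappa_j\ge S^{N/2}$, contradicting $\limsup_n|\nabla u_n|_2^2\le C\gamma_\mu(a)<\tfrac12 S^{N/2}$ for $\mu$ large (Lemma \ref{goodest0} plus Lemma \ref{ESTMOUNTPASS}); this gives only $L^{2^*}_{loc}$ convergence, so the paper then needs the Strauss radial decay estimate $|u_n(x)|\le C|x|^{-(N-1)/2}$ and dominated convergence to control $\mathbb{R}^N\setminus B_R$. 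You instead work globally with the Brezis--Lieb decomposition $w_n=u_n-u$, derive $A-B=\lambda m$ by testing \eqref{EQ10} against $u_n$ and the limit equation against $u$, obtain the dichotomy $B=0$ or $B\ge S^{N/2}$ from $SB^{2/2^*}\le A\le B$ (using $\lambda\le 0$, which indeed follows from Lemma \ref{mu0} since $\tfrac{N}{q}-\tfrac{N-2}{2}>0$ for $q<2^*$), and kill the second alternative with the energy bound $\gamma_\mu(a)\ge\tfrac{\lambda a^2}{2}+\tfrac{B}{N}$ together with $|\lambda|\le\tfrac{C}{a^2}\gamma_\mu(a)$. Both arguments ultimately exploit the same mechanism --- a bubble costs at least $S^{N/2}$ while $\gamma_\mu(a)\to0$ as $\mu\to+\infty$ --- but yours buys a more self-contained proof: it dispenses with the reference to \cite{GP}, treats concentration and escape to infinity in $L^{2^*}$ in one stroke, and avoids the radial decay lemma entirely (radial symmetry enters only through the compact embedding giving \eqref{convq}). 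The paper's measure-theoretic route, by contrast, localizes where compactness could fail, at the price of the extra tail estimate. The only housekeeping you should make explicit is that the constants in Lemmas \ref{Limitacao}, \ref{goodest0} and \ref{mu0} depend only on $N$ and $q$, not on $\mu$, so that your $C'$ is uniform and the final choice of $\mu^*$ is legitimate; this is indeed the case.
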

\begin{proof}
Using  the concentration-compactness principle due to Lions \cite{Lions},  we can find an at most countable index $\mathcal{J}$, sequences $(x_{i})\subset \mathbb{R}^{N}, ( \kappa_{i}), (\nu_{i})\subset (0, \infty)$ such that\\
\noindent $(i)$	\,\, $|\nabla u_n|^{2} \to \kappa$ weakly-$^*$ in the sense of measure \\
\noindent and \\
\noindent $(ii)$ \,\, $|u_n|^{2^*} \to \nu$ weakly-$^*$ in the sense of measure, \\
and
$$
\left\{
\begin{array}{l}
	(a)\quad  \nu=|u|^{2^*}+\sum_{j \in J}\nu_j \delta_{x_j},\\
	(b)\quad  \kappa \geq |\nabla u|^{2}+\sum_{j \in J}\kappa_j \delta_{x_j},\\
	(c)\quad S \nu_j^{\frac{2}{2^*}} \leq \kappa_j,\,\, \forall j\in \mathcal{J},\\
\end{array}
\right.
$$
where $\delta_{x_{j}}$ is the Dirac mass at the point $x_{j}$. Since
$$
-\Delta u_n-f(u_n)=\lambda_n u_n +o_n(1)\quad \mbox{in} \quad (H^{1}(\mathbb{R}^N))^*,
$$
we derive that
$$
\int_{\mathbb{R}^N}\nabla u_n \nabla \phi \,dx-\lambda_n\int_{\mathbb{R}^N}u_n \phi \,dx=\mu \int_{\mathbb{R}^N}|u_n|^{q-2}u_n\phi\,dx+ \int_{\mathbb{R}^N}|u_n|^{2^*-2}u_n\phi\,dx, \quad \forall \phi \in H^{1}(\mathbb{R}^N).
$$
Now, arguing as in \cite[Lemma 2.3]{GP}, $\mathcal{J}$ is empty or otherwise $\mathcal{J}$ is nonempty but finite. In the case that $\mathcal{J}$ is nonempty but finite, we must have
$$
\kappa_j \geq {S^{\frac{N}{2}}}, \quad \forall j \in \mathcal{J}.
$$
However, by Lemma \ref{goodest0},
$$
 \limsup_{n \to +\infty}|\nabla u_n|_{2}^{2} \leq C \gamma_\mu(a).
$$
Then, if $\mu^*>0$ is fixed of such way that
$$
C \gamma_\mu(a) < \frac{1}{2}{S^{\frac{N}{2}}},
$$
we get a contradiction, and so, $\mathcal{J}= \emptyset$. From this,
\begin{equation} \label{localconvergence}
u_n \to u \quad \mbox{in} \quad  L^{2^*}_{loc}(\mathbb{R}^N).
\end{equation}
\begin{claim} \label{convBR} For each $R>0$, we have
$$
u_n \to u \quad \mbox{in} \quad  L^{2^*}(\mathbb{R}^N \setminus B_R(0)).
$$
\end{claim}	
\noindent Indeed, as $u_n \in H_{rad}^{1}(\mathbb{R}^N)$, we know that
$$
|u_n(x)| \leq \frac{\|u_n\|}{|x|^{\frac{N-1}{2}}}, \quad \mbox{a.e. in} \quad \mathbb{R}^N.
$$
Since $(u_n)$ is a bounded sequence in $H^{1}(\mathbb{R}^N)$, we obtain
$$
|u_n(x)| \leq \frac{C}{|x|^{\frac{N-1}{2}}}, \quad \mbox{a.e. in} \quad \mathbb{R}^N,
$$
and so,
$$
|u_n(x)|^{2^*} \leq \frac{C_1}{|x|^{\frac{N(N-1)}{N-2}}}, \quad \mbox{a.e. in} \quad \mathbb{R}^N.
$$
Recalling that  $\frac{C_1}{|\,\cdot\,|^{\frac{N(N-1)}{N-2}}} \in L^{1}(\mathbb{R}^N \setminus B_R(0))$ and $u_n(x) \to u(x)$ a.e. in $\mathbb{R}^N \setminus B_R(0)$, the Lebesgue's Theorem gives
$$
u_n \to u \quad \mbox{in} \quad  L^{2^*}(\mathbb{R}^N \setminus B_R(0)),
$$
showing the Claim \ref{convBR}. Now, the Claim \ref{convBR} combined with (\ref{localconvergence}) ensures that
$$
u_n \to u \quad \mbox{in} \quad L^{2^*}(\mathbb{R}^N).
$$

\end{proof}

\section{Proof of Theorem \ref{T1}}
From the previous analysis the weak limit $u$ of $(u_n)$ is nontrivial. Therefore, by Lemma \ref{mu0}
$$
\lim_{n \to +\infty} \lambda_n = -\frac{\mu }{a^2}\Big(\frac{N}{q}-\frac{N-2}{2}\Big)\lim_{n \to +\infty}\int_{\mathbb{R}^N}|u_n|^q \,dx=-\frac{\mu }{a^2}\Big(\frac{N}{q}-\frac{N-2}{2}\Big)\int_{\mathbb{R}^N}|u|^q \,dx<0.
$$
So, we can assume without loss of generality that
\begin{equation}\label{new}
\lambda_n \to \lambda_a<0 \quad \mbox{as} \quad n \to +\infty.
\end{equation}
Now, the equality (\ref{EQ10}) and \eqref{new}
imply that
\begin{equation} \label{EQ2}
	-\Delta u-f(u)=\lambda_au, \quad \mbox{in} \quad \mathbb{R}^N.
\end{equation}
Thus,
$$
|\nabla u|_{2}^{2}-\lambda_a|u|_{2}^{2}=\int_{\mathbb{R}^N}f(u)u\,dx.
$$
On the other hand,
$$
|\nabla u_n|_{2}^{2}-\lambda_n|u_n|_{2}^{2}=\int_{\mathbb{R}^N}f(u_n)u_n\,dx+o_n(1),
$$
or yet,
$$
|\nabla u_n|_{2}^{2}-\lambda_a|u_n|_{2}^{2}=\int_{\mathbb{R}^N}f(u_n)u_n\,dx+o_n(1).
$$
Recalling that by Lemma \ref{crescimento}
$$
u_n \to u \quad \mbox{in} \quad L^{2^*}(\mathbb{R}^N),
$$
and using the below limit
$$
u_n \to u \quad \mbox{in} \quad L^{q}(\mathbb{R}^N),
$$
we obtain
$$
\lim_{n \to +\infty}\int_{\mathbb{R}^N}f(u_n)u_n\,dx=\int_{\mathbb{R}^N}f(u)u\,dx,
$$
from where it follows that
$$
\lim_{n \to +\infty}(|\nabla u_n|_{2}^{2}-\lambda_a|u_n|_{2}^{2})=|\nabla u|_{2}^{2}-\lambda_a|u|_{2}^{2}.
$$
Since $\lambda_a<0$, the last equality implies that
$$
u_n \to u \quad \mbox{in} \quad H^{1}(\mathbb{R}^N),
$$
implying that $|u|_{2}^{2}=a$. This establishes the desired result.

\section{Normalized solutions: The exponential critical growth case for $N=2$}

In this section we shall deal with the case $N=2$, where $f$ has an exponential critical growth and $a \in (0,1)$. We start our study recalling that by $(f_1)$ and $(f_2)$, we know that fixed $q >2$, for any $\zeta>0$ and $\alpha>4\pi$, there exists a constant $C>0$, which depends on $q$, $\alpha$, $\zeta$, such that
\begin{equation}
	\label{1.2}
	|f(t)|\leq\zeta |t|^{\tau}+C|t|^{q-1}(e^{\alpha t^{2}}-1) \text{ for all } t \in \mathbb{R}
\end{equation}
and, using $(f_3)$, we have
\begin{equation}
	\label{1.3}
	|F(t)|\leq\zeta |t|^{\tau+1}+C|t|^{q}(e^{\alpha t^{2}}-1) \text{ for all } t \in \mathbb{R}.
\end{equation}
Moreover, it is easy to see that, by \eqref{1.2},
\begin{equation}
	\label{1.4}
	|f(t)t| \leq\zeta |t|^{\tau+1}+C\vert t\vert^{q}(e^{\alpha  t^{2}}-1) \text{ for all } t\in\mathbb{R}.
\end{equation}

Finally, let us recall the following version of Trudinger-Moser inequality as stated e.g. in \cite{Cao}.
\begin{lemma}\label{Cao}
	If $\alpha>0$ and $u\in H^{1}(\mathbb{R}^{2})$, then
	\begin{equation*}
		\int_{\mathbb{R}^{2}}(e^{\alpha  u^{2}}-1)dx<+\infty.
	\end{equation*}
	Moreover, if $|\nabla u|_{2}^{2}\leq 1$, $|  u|_{2}\leq M<+\infty$, and $0<\alpha< 4\pi$, then there exists a positive constant $C(M, \alpha)$, which depends only on $M$ and $\alpha$,  such that
	\begin{equation*}
		\int_{\mathbb{R}^{2}}(e^{\alpha u^{2}}-1)dx\leq C(M, \alpha).
	\end{equation*}
\end{lemma}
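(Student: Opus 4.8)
The plan is to reduce everything to the classical Moser--Trudinger inequality on a bounded domain (on the unit disk $B_1$, where the sharp exponent is $4\pi$) and to prove the \emph{uniform} bound (the second assertion) first, deducing the finiteness statement (the first assertion) from it afterwards.

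For the uniform estimate, assume $|\nabla u|_2^2\le 1$, $|u|_2\le M$ and $0<\alpha<4\pi$. First I would invoke Schwarz symmetrization: replacing $u$ by its symmetric decreasing rearrangement $u^{*}$ leaves $\int_{\mathbb{R}^2}(e^{\alpha u^2}-1)\,dx$ unchanged (by equimeasurability, since $t\mapsto e^{\alpha t^2}-1$ is even and increasing in $|t|$), preserves $|u|_2$, and does not increase $|\nabla u|_2$ by the Pol\'ya--Szeg\H{o} inequality. Hence I may assume $u=u^{*}\ge 0$ is radial and non-increasing, and split $\mathbb{R}^2=B_1\cup(\mathbb{R}^2\setminus B_1)$. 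On the exterior region the monotonicity gives the pointwise bound $\pi r^2 u(r)^2\le \int_{B_r}u^2\le M^2$, so $u(r)^2\le M^2/(\pi r^2)\le M^2/\pi$ for $r\ge 1$; using $e^t-1\le te^t$ I can bound $\int_{|x|\ge1}(e^{\alpha u^2}-1)\,dx\le \alpha e^{\alpha M^2/\pi}\int_{|x|\ge1}u^2\,dx\le \alpha M^2 e^{\alpha M^2/\pi}$, which is of the form $C(M,\alpha)$.

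The core of the argument is the interior ball $B_1$, where the possible concentration lives. Setting $c:=u(1)=u|_{\partial B_1}$ (so $0\le c\le M/\sqrt{\pi}$), the function $w:=(u-c)|_{B_1}$ belongs to $H_0^1(B_1)$ with $|\nabla w|_2\le|\nabla u|_2\le 1$. Given $\varepsilon>0$, Young's inequality gives $u^2=(w+c)^2\le(1+\varepsilon)w^2+(1+\varepsilon^{-1})c^2$, whence $e^{\alpha u^2}\le e^{\alpha(1+\varepsilon^{-1})M^2/\pi}\,e^{\alpha(1+\varepsilon)w^2}$ on $B_1$. Since $\alpha<4\pi$, I fix $\varepsilon>0$ small enough that $\alpha(1+\varepsilon)\le 4\pi$, and then the classical Moser--Trudinger inequality on the disk (for $w\in H_0^1(B_1)$ with $|\nabla w|_2\le 1$) yields $\int_{B_1}e^{\alpha(1+\varepsilon)w^2}\,dx\le C|B_1|$. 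Combining the two regions gives the desired $C(M,\alpha)$. The step I expect to be most delicate is precisely this interior splitting: one must peel off the (bounded) boundary trace $c$ so that the remaining part lies in $H_0^1(B_1)$ with a \emph{subcritical} exponent, and the room $\alpha<4\pi$ is exactly what permits the choice of $\varepsilon$ — this is where the borderline nature of the inequality is felt.

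Finally, for the finiteness statement with arbitrary $\alpha>0$ and arbitrary $u\in H^1(\mathbb{R}^2)$, I would write $u=\phi+\psi$ with $\phi\in C_c^\infty(\mathbb{R}^2)$ and $\psi=u-\phi$ having $|\nabla\psi|_2$ as small as we wish (density of $C_c^\infty$ in $H^1$). Choosing $\phi$ so that $\alpha(1+\varepsilon)|\nabla\psi|_2^2<4\pi$ and applying the already-proved uniform bound to $\psi/|\nabla\psi|_2$ with parameter $\alpha(1+\varepsilon)|\nabla\psi|_2^2$ shows $\int_{\mathbb{R}^2}(e^{\alpha(1+\varepsilon)\psi^2}-1)\,dx<\infty$. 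Since $\phi$ is bounded with compact support, the pointwise inequality $\alpha u^2\le\alpha(1+\varepsilon)\psi^2+\alpha(1+\varepsilon^{-1})\phi^2$, together with splitting the integral over $\operatorname{supp}\phi$ (finite measure, where the $\phi$-factor is bounded) and its complement (where $u=\psi$), yields $\int_{\mathbb{R}^2}(e^{\alpha u^2}-1)\,dx<+\infty$, completing the proof.
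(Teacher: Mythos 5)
Your proof is correct. Note, however, that the paper does not prove this lemma at all: it is quoted verbatim from Cao's paper \cite{Cao} as a known form of the Trudinger--Moser inequality, so there is no in-paper argument to compare against. Your argument is essentially the standard proof of Cao's inequality: Schwarz symmetrization (equimeasurability plus P\'olya--Szeg\H{o}) reduces to a radial non-increasing function; the radial decay estimate $u(r)^2\le M^2/(\pi r^2)$ together with $e^t-1\le te^t$ controls the exterior of $B_1$ by $\alpha M^2 e^{\alpha M^2/\pi}$; and on $B_1$ subtracting the boundary value $c=u(1)$ produces $w\in H_0^1(B_1)$ with $|\nabla w|_2\le 1$, so that Young's inequality with a parameter $\varepsilon$ chosen so that $\alpha(1+\varepsilon)\le 4\pi$ reduces the matter to Moser's sharp inequality on the disk. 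The deduction of the qualitative finiteness statement for arbitrary $\alpha>0$ by splitting $u=\phi+\psi$ with $\phi\in C_c^\infty$ and $|\nabla\psi|_2$ small is also the usual route and is carried out correctly. Two small points you should make explicit if you write this up in full: $u^*(1)$ is well defined because a radial $H^1$ function is absolutely continuous on compact subsets of $(0,\infty)$ (so $c$ is an honest number, bounded by $M/\sqrt{\pi}$), and $w=u^*-c$ indeed lies in $H_0^1(B_1)$ because it is nonnegative, vanishes continuously on $\partial B_1$, and has $\nabla w=\nabla u^*$ there. Neither point is a gap; both are routine.
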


\begin{lemma} \label{alphat11} Let $(u_{n})$ be a sequence in $H^{1}(\mathbb{R}^{2})$ with $u_n \in S(a)$ and
	$$
	\limsup_{n \to +\infty} |\nabla u_n |_{2}^{2}  < 1-a^{2}.
	$$
	Then, there exist  $t> 1$, $t$ close to 1,  and $C > 0$  satisfying
	\[
	\int_{\mathbb{R}^{2}}\left(e^{4 \pi |u_n|^{2}} - 1 \right)^t dx \leq C, \,\,\,\,\forall\, n \in \mathbb{N}.
	\]
	
\end{lemma}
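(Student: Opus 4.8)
The plan is to reduce everything to Cao's Trudinger--Moser inequality (Lemma \ref{Cao}) after two preliminary moves: an elementary pointwise inequality that absorbs the power $t$ into the exponent, and a rescaling that normalizes the full $H^1$-norm rather than the Dirichlet norm alone.

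First I would record the elementary inequality
$$(e^s-1)^t \le e^{ts}-1, \qquad \forall\, s\ge 0,\ t\ge 1.$$
Setting $x=e^s-1\ge 0$, this is equivalent to $x^t\le (1+x)^t-1$, which holds because $g(x)=(1+x)^t-1-x^t$ satisfies $g(0)=0$ and $g'(x)=t[(1+x)^{t-1}-x^{t-1}]\ge 0$. Applying it with $s=4\pi|u_n|^2$ gives
$$\int_{\mathbb{R}^2}\left(e^{4\pi|u_n|^2}-1\right)^t dx \le \int_{\mathbb{R}^2}\left(e^{4\pi t|u_n|^2}-1\right)dx,$$
so it suffices to bound the right-hand side uniformly in $n$.

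Next I would exploit the hypothesis. Since $|u_n|_2^2=a^2$ and $\limsup_n|\nabla u_n|_2^2<1-a^2$, the full norms satisfy $\limsup_n\|u_n\|^2=\limsup_n\left(|\nabla u_n|_2^2+a^2\right)<1$; hence there are $\delta>0$ and $N_0\in\mathbb{N}$ with $\|u_n\|^2\le 1-\delta$ for all $n\ge N_0$. I would then fix $t>1$ so close to $1$ that $t(1-\delta)<1$ and set $\alpha_0:=4\pi t(1-\delta)<4\pi$. The decisive normalization is $v_n:=u_n/\|u_n\|$, for which $|\nabla v_n|_2\le 1$ and, crucially, $|v_n|_2\le\|v_n\|=1$; this is exactly where normalizing by the whole $H^1$-norm (instead of by $|\nabla u_n|_2$, which could degenerate when the gradient is small and make the $L^2$-mass of the rescaled functions blow up) pays off. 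Since $4\pi t|u_n|^2=4\pi t\|u_n\|^2\,|v_n|^2\le \alpha_0|v_n|^2$ for $n\ge N_0$, monotonicity of the exponential together with Lemma \ref{Cao} applied with $M=1$ and exponent $\alpha_0$ yields
$$\int_{\mathbb{R}^2}\left(e^{4\pi t|u_n|^2}-1\right)dx\le \int_{\mathbb{R}^2}\left(e^{\alpha_0|v_n|^2}-1\right)dx\le C(1,\alpha_0),$$
a bound independent of $n\ge N_0$. For the finitely many indices $n<N_0$ each integral is finite by the first assertion of Lemma \ref{Cao}, so enlarging $C$ to absorb them delivers the claim for all $n$.

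The main obstacle, and the only genuinely delicate point, is the choice of rescaling: one must keep the effective exponent strictly below the critical threshold $4\pi$ uniformly in $n$ while simultaneously keeping the $L^2$-mass of the rescaled functions bounded. These two requirements are reconciled precisely by normalizing with $\|u_n\|$ and using the strict gap $\limsup_n\|u_n\|^2<1$ to leave room for a power $t>1$ slightly larger than one.
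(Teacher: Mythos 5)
Your proposal is correct and follows essentially the same route as the paper: the pointwise inequality $(e^{s}-1)^{t}\leq e^{ts}-1$, the observation that $\|u_n\|^{2}=|\nabla u_n|_2^{2}+a^{2}$ stays below some $m<1$ for large $n$, the normalization $v_n=u_n/\|u_n\|$ with $t$ chosen so that $tm<1$, Cao's inequality, and enlarging the constant to cover the finitely many initial indices. Your write-up is in fact somewhat more explicit (proving the elementary inequality and spelling out why $|v_n|_2\leq 1$), but there is no substantive difference.
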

\begin{proof}
	As
	\[
	\limsup_{n\rightarrow\infty} |\nabla u_n|_2^2 <1-a^{2} \quad \mbox{and} \quad |u_n|_2^{2}=a^2 <1,
	\]
	there exist $m >0$ and $n_0\in \mathbb{N}$ verifying
	\[
	\|u_n\|^{2} < m < 1,
	\,\,\,\text{for any}\,\,\,  n \geq n_0.
	\]
	Fix  $t > 1$, with $t$ close to $1$ such that $t m < 1$  and
	$$
	\int_{\mathbb{R}^{2}}\left(e^{4 \pi|u_n|^{2}} -1 \right)^t dx
	\leq \int_{\mathbb{R}^{2}}\left(e^{4 t m  \pi(\frac{|u_n|}{||u_n||})^{2}} - 1 \right) dx,\,\,\text{for any}\,\, n \geq n_0,
	$$
	where we have used the inequality
\begin{equation*}
		\label{ineqe}
		(e^{s}-1)^{t}\leq e^{ts}-1, \text{ for }t>1 \text{ and } s\geq 0.
	\end{equation*}
	Hence, by Lemma \ref{Cao}, there exists $C_{1}=C_{1}(t, m, a)>0$
	$$
	\int_{\mathbb{R}^{2}}\left(e^{4 \pi |u_n|^{2}} -1 \right)^t dx \leq C_{1} \, \,\,\,\,\forall\,  n \geq n_0.
	$$
 Now, the lemma follows fixing
	$$
	C=\max\left\{C_1, \int_{\mathbb{R}^{2}}\left(e^{4 \pi |u_{1}|^{2}} -1 \right)^t dx,....,\int_{\mathbb{R}^{2}}\left(e^{4 \pi |u_{n_0}|^{2}} -1 \right)^t dx \right\}.
	$$
\end{proof}

\begin{corollary} \label{Convergencia em limitados} Let $(u_{n})$ be a sequence in $H^{1}(\mathbb{R}^{2})$ with $u_n \in S(a)$ and
	$$
	\limsup_{n \to +\infty} |\nabla u_n |_2^{2}  < 1-a^2.
	$$
	If $u_n \rightharpoonup u$ in $H^{1}(\mathbb{R}^{2})$ and $u_n(x) \to u(x)$ a.e in $\mathbb{R}^{2}$, then
	$$
	F(u_n) \to F(u) \,\, \mbox{in} \,\, L^{1}(B_R(0)), \,\,\text{for any} \,\, R>0.
	$$ 	
\end{corollary}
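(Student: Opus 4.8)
The plan is to prove the $L^{1}(B_R(0))$ convergence through the Vitali convergence theorem. Since $u_n(x)\to u(x)$ a.e. and $F$ is continuous, we immediately get $F(u_n)\to F(u)$ a.e. in $\mathbb{R}^{2}$, so it suffices to show that the sequence $(F(u_n))$ is uniformly integrable on the finite-measure set $B_R(0)$. I would obtain this from a uniform bound of $(F(u_n))$ in $L^{r}(B_R(0))$ for some $r>1$: such a bound yields equi-integrability by H\"older's inequality, because for every measurable $E\subseteq B_R(0)$ one has $\int_{E}|F(u_n)|\,dx\leq |E|^{1/r'}\,\|F(u_n)\|_{L^{r}(B_R)}\to 0$ as $|E|\to 0$, uniformly in $n$.

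To produce the $L^{r}$-bound, first note that, exactly as in Lemma \ref{alphat11}, the hypothesis $\limsup_{n}|\nabla u_n|_2^{2}<1-a^{2}$ together with $|u_n|_2^{2}=a^{2}$ furnishes a constant $m<1$ and an index $n_0$ with $\|u_n\|^{2}<m<1$ for all $n\geq n_0$; in particular $(u_n)$ is bounded in $H^{1}(\mathbb{R}^{2})$. Starting from the growth estimate \eqref{1.3},
$$
|F(u_n)|\leq \zeta|u_n|^{\tau+1}+C|u_n|^{q}\left(e^{\alpha u_n^{2}}-1\right),
$$
I would bound the two terms separately. The polynomial term is harmless: by the continuous embedding $H^{1}(\mathbb{R}^{2})\hookrightarrow L^{s}(\mathbb{R}^{2})$ valid for every $s\in[2,\infty)$, the functions $|u_n|^{\tau+1}$ are bounded in $L^{r}(B_R)$ for any $r\geq 1$.

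The exponential term is the delicate one, and it is where the strict gap $m<1$ is used. Fixing $\alpha>4\pi$ close to $4\pi$, I would apply H\"older's inequality with conjugate exponents $p_1,p_2>1$, both close to $1$,
$$
\int_{B_R}\big(|u_n|^{q}(e^{\alpha u_n^{2}}-1)\big)^{r}\,dx\leq\Big(\int_{B_R}|u_n|^{qrp_1}\,dx\Big)^{1/p_1}\Big(\int_{B_R}(e^{\alpha u_n^{2}}-1)^{rp_2}\,dx\Big)^{1/p_2}.
$$
The first factor is again controlled by the Sobolev embedding. For the second, using $(e^{s}-1)^{\sigma}\leq e^{\sigma s}-1$ and the inequality $u_n^{2}<m\,(u_n/\|u_n\|)^{2}$ (which holds since $\|u_n\|^{2}<m$), I would write $(e^{\alpha u_n^{2}}-1)^{rp_2}\leq e^{rp_2\alpha m (u_n/\|u_n\|)^{2}}-1$, where $v_n:=u_n/\|u_n\|$ satisfies $|\nabla v_n|_2^{2}\leq 1$ and $|v_n|_2\leq 1$. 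Choosing $r,p_2$ close enough to $1$ and $\alpha$ close enough to $4\pi$ so that $rp_2\alpha m<4\pi$ — which is possible precisely because $m<1$ — Lemma \ref{Cao} gives a uniform bound for the last integral. This yields $\sup_n\|F(u_n)\|_{L^{r}(B_R)}<\infty$ for some $r>1$, hence the desired equi-integrability, and the Vitali convergence theorem then gives $F(u_n)\to F(u)$ in $L^{1}(B_R(0))$. The main obstacle is the bookkeeping in this exponential estimate: one must simultaneously tune $\alpha>4\pi$, the H\"older exponents $p_1,p_2$, and the integrability exponent $r>1$ so that the product $rp_2\alpha m$ stays strictly below the critical threshold $4\pi$, and this is exactly what the strict inequality $\limsup_{n}|\nabla u_n|_2^{2}<1-a^{2}$ makes available.
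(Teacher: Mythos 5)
Your argument is correct, and it reaches the conclusion by a genuinely different final step than the paper. The shared core is the same: both you and the authors exploit the strict gap $\|u_n\|^{2}<m<1$ (furnished by $\limsup_n|\nabla u_n|_2^{2}<1-a^{2}$ and $|u_n|_2^{2}=a^{2}<1$) together with $(e^{s}-1)^{\sigma}\leq e^{\sigma s}-1$ and Lemma \ref{Cao} applied to $u_n/\|u_n\|$ to control the exponential factor in some $L^{t}$ with $t>1$ and $t\alpha m<4\pi$. Where you diverge is in how this is converted into $L^{1}(B_R)$ convergence. The paper extracts a weak limit $h_n\rightharpoonup h$ in $L^{t}(\mathbb{R}^{2})$, invokes Willem's Lemma A.1 to obtain (along a subsequence) a dominating function $\omega\in L^{qt'}(B_R)$ with $|u_n|\leq\omega$, tests the weak convergence against $\omega^{q}\chi_R\in L^{t'}$, and concludes with a generalized dominated convergence theorem. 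You instead absorb the polynomial factor $|u_n|^{q}$ via one more H\"older step into a single uniform bound $\sup_n\|F(u_n)\|_{L^{r}(B_R)}<\infty$ for some $r>1$, which gives equi-integrability on the finite-measure set $B_R$, and then apply Vitali. Your route is more elementary and self-contained: it avoids Willem's lemma and the weak-convergence testing argument, works for the full sequence rather than a subsequence, and the uniform $L^{r}$ bound with $r>1$ is a slightly stronger intermediate statement. The paper's weak-convergence machinery is essentially the template it reuses in Lemma \ref{convergencia} to get convergence on all of $\mathbb{R}^{2}$ via compact radial embeddings; on a fixed ball your Vitali argument is the cleaner choice. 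The only bookkeeping you leave implicit is the finitely many indices $n<n_0$, for which $\|F(u_n)\|_{L^{r}(B_R)}<\infty$ individually by the first part of Lemma \ref{Cao}, so the supremum over all $n$ is still finite; this is the same adjustment made at the end of Lemma \ref{alphat11}.
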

\begin{proof} By \eqref{1.3}, fixed $q >2$, for any $\zeta>0$ and $\alpha>4\pi$, there exists a constant $C>0$, which depends on $q$, $\alpha$, $\zeta$, such that
\begin{equation*}
	|F(t)|\leq\zeta |t|^{\tau+1}+C|t|^{q}(e^{\alpha t^{2}}-1)\,\, \text{ for all }\, t \in \mathbb{R},
\end{equation*}
from where it follows that,
	\begin{equation} \label{Domina}
		|F(u_n)| \leq \zeta|u_n|^{\tau+1}+C|u_n|^{q}(e^{\alpha  |u_n|^{2}}-1), \quad \forall n \in \mathbb{N}.
	\end{equation}
with
$$
\zeta|u_n|^{\tau+1}+C|u_n|^{q}(e^{\alpha \pi |u_n|^{2}}-1)\rightarrow \zeta |u|^{\tau+1}+C|u|^{q}(e^{\alpha  |u|^{2}}-1)\,\text{ a.e. in }\, \mathbb{R}^2 \text{ as }n\to+\infty.
$$
Similar to arguments in Lemma \ref{alphat11}, there exist $m >0$ and $n_0\in \mathbb{N}$ verifying
	\[
	\|u_n\|^{2} < m < 1,
	\,\,\,\text{for any}\,\,\,  n \geq n_0.
	\]
Choose $\alpha>4 \pi$ close to 4$\pi$, $t>1$ close to $1$ with $\alpha m t<4\pi$, and
by \cite[Lemma A.1]{Willem},
	there exists $\omega  \in L^{qt'}(B_R(0))$ such that $\vert u_{n}\vert\leq \omega$ a.e. in $B_R(0)$, where $t'$ is the conjugate exponent of $t$. Thus,
\begin{equation*}
	|u_n|^{q}(e^{\alpha  |u_n|^{2}}-1)\leq \omega^{q}(e^{\alpha  |u_n|^{2}}-1), \quad \text{a.e. in}\, B_R(0),
	\end{equation*}
and
\begin{equation}\label{neww1}
		|F(u_n)| \leq \zeta|u_n|^{\tau+1}+C\omega^{q}(e^{\alpha  |u_n|^{2}}-1), \quad \text{a.e. in}\, B_R(0).
	\end{equation}
Setting
	$$
	h_n(x)=C(e^{\alpha |u_n|^{2}}-1),
	$$
We can argue as in the proof of Lemma \ref{alphat11}, there exists $C>0$ such that
$$
	h_n \in L^{t}(\mathbb{R}^{2}) \quad \mbox{and} \quad \sup_{n \in \mathbb{N}}|h_n|_{t}<+\infty,
	$$
Therefore, for some subequence of $(u_n)$, still denoted by itself, we derive that
	\begin{equation} \label{Newlimit}
		h_n \rightharpoonup h=C(e^{\alpha |u|^{2}}-1), \,\, \mbox{in} \,\, L^{t}(\mathbb{R}^{2}).
	\end{equation}
\begin{claim} Now we show that
		$$
		 \omega^{q}h_n \to  \omega^{q} h \quad \mbox{in} \quad L^{1}(B_R(0)), \quad \forall R>0.
		$$
	\end{claim}
	\noindent Indeed, for each $R>0$, we consider the characteristic function $\chi_R$ associated with $B_R(0) \subset \mathbb{R}^{2}$, that is,
	$$
	\chi_R(x)=
	\left\{
	\begin{array}{l}
		1, \quad x \in B_{R}(0), \\
		0, \quad x \in \mathbb{R}^{N} \setminus B_{R}(0),
	\end{array}
	\right.
	$$
	which belongs to  $L^{qt'}(\mathbb{R}^{2})$. Thus, by the weak limit (\ref{Newlimit}),
	$$
	\int_{\mathbb{R}^{2}} \omega^{q}\chi_R h_n\,dx \to \int_{\mathbb{R}^{2}}\omega^{q}\chi_R h\,dx,
	$$
	or equivalently,
	\begin{equation} \label{Neww2}
	\int_{B_R(0)}\omega^{q} h_n\,dx \to \int_{B_R(0)}\omega^{q} h\,dx.
	\end{equation}
Hence, by $u_n \to u$ in $L^{\tau+1}(B_R)$,\,\,\eqref{neww1} and \eqref{Neww2}, applying a variant of the Lebesgue Dominated Convergence Theorem, we deduce that
$$
	F(u_n) \to F(u) \quad \mbox{in} \quad L^{1}(B_R(0)).
	$$

\end{proof}

The next lemma is crucial in our argument.

\begin{lemma} \label{convergencia} Let $(u_n) \subset H_{rad}^{1}(\mathbb{R}^{2})$ be a sequence  with $u_n \in S(a)$ and
	$$
	\limsup_{n \to +\infty} |\nabla u_n |_2^{2}  < 1-a^2.
	$$
	Then, there exists $\alpha$ close to $4\pi$, such that for all $q >2$,
	$$
	|u_n|^{q}(e^{\alpha  |u_n(x)|^{2}}-1) \to |u|^{q}(e^{\alpha |u(x)|^{2}}-1) \,\, \mbox{in} \,\, L^{1}(\mathbb{R}^N).
	$$
	
\end{lemma}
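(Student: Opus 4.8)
The plan is to split $\mathbb{R}^2$ into a large ball $B_R(0)$ and its complement, handling the bounded piece by the compactness already obtained and the exterior piece by exploiting the radial decay of the $u_n$'s.

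First, on $B_R(0)$ I would repeat, essentially verbatim, the argument of Corollary \ref{Convergencia em limitados}. Fixing $\alpha>4\pi$ close to $4\pi$ and $t>1$ close to $1$ with $\alpha m t<4\pi$ (here $m<1$ bounds $\|u_n\|^2$ for $n$ large, as in Lemma \ref{alphat11}), the sequence $h_n=C(e^{\alpha|u_n|^2}-1)$ is bounded in $L^{t}(\mathbb{R}^2)$ and converges weakly to $h=C(e^{\alpha|u|^2}-1)$ in $L^{t}(\mathbb{R}^2)$. Since $|u_n|\leq \omega \in L^{qt'}(B_R)$ a.e.\ by \cite[Lemma A.1]{Willem}, a generalized Lebesgue Dominated Convergence Theorem yields
$$
|u_n|^{q}(e^{\alpha|u_n|^2}-1)\to |u|^{q}(e^{\alpha|u|^2}-1)\quad\mbox{in } L^{1}(B_R(0)),\quad \forall R>0.
$$

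Second, I would control the exterior integral uniformly in $n$ using the radial structure. As $(u_n)\subset H^{1}_{rad}(\mathbb{R}^2)$ is bounded in $H^{1}$, the radial estimate already invoked in the proof of Claim \ref{convBR} gives $|u_n(x)|\leq C|x|^{-1/2}$ a.e., with $C$ independent of $n$. Hence, for $|x|\geq R$ the quantity $|u_n(x)|^2\leq C/R$ is small, so from $e^{s}-1\leq C_0 s$ on bounded intervals,
$$
|u_n(x)|^{q}(e^{\alpha|u_n(x)|^2}-1)\leq C|u_n(x)|^{q+2}\leq \frac{C}{|x|^{(q+2)/2}},\quad |x|\geq R.
$$
Because $q>2$ forces $(q+2)/2>2$, the majorant is integrable at infinity in $\mathbb{R}^2$ and
$$
\int_{\mathbb{R}^2\setminus B_R(0)}|u_n|^{q}(e^{\alpha|u_n|^2}-1)\,dx\leq \frac{C}{R^{(q-2)/2}}\to 0\quad\mbox{as } R\to+\infty,
$$
uniformly in $n$; the identical bound also holds for the limit $u$. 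Combining the two steps---given $\varepsilon>0$ pick $R$ making both exterior integrals less than $\varepsilon/3$, then apply the $L^{1}(B_R)$ convergence for $n$ large---delivers the claimed convergence in $L^{1}(\mathbb{R}^2)$.

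The main obstacle is the passage from local to global convergence: the interior estimate alone never yields convergence on all of $\mathbb{R}^2$, and one genuinely needs uniform smallness of the tails. This is exactly where the restriction to radial functions is indispensable, through the pointwise decay $|u_n(x)|\lesssim |x|^{-1/2}$; the exponent bookkeeping then works out precisely because the hypothesis $q>2$ makes $|x|^{-(q+2)/2}$ integrable over the exterior region in dimension two. A secondary point to track is that the same $\alpha$ close to $4\pi$ and $t$ close to $1$ must serve simultaneously in the Trudinger--Moser estimate of Lemma \ref{alphat11} and in the weak-$L^{t}$ convergence, which is why the condition $\alpha m t<4\pi$ is imposed.
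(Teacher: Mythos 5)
Your proof is correct, but it reaches the global $L^{1}$ convergence by a genuinely different route than the paper. The paper does not localize at all: it pairs the weak convergence $h_n \rightharpoonup h$ in $L^{t}(\mathbb{R}^{2})$ (coming from the Trudinger--Moser bound of Lemma \ref{alphat11}) against the \emph{strong} convergence $|u_n|^{q}\to |u|^{q}$ in $L^{t'}(\mathbb{R}^{2})$, which it obtains from the compact embedding $H^{1}_{rad}(\mathbb{R}^{2})\hookrightarrow L^{qt'}(\mathbb{R}^{2})$; this yields $\int_{\mathbb{R}^{2}}|u_n|^{q}h_n\,dx \to \int_{\mathbb{R}^{2}}|u|^{q}h\,dx$ in one stroke, and the $L^{1}$ convergence then follows from the nonnegativity of the integrands via a Scheff\'e-type upgrade. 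You instead run the local argument of Corollary \ref{Convergencia em limitados} on $B_R(0)$ and control the tails by the Strauss decay $|u_n(x)|\leq C|x|^{-1/2}$, with $q>2$ guaranteeing that the majorant $|x|^{-(q+2)/2}$ is integrable near infinity in $\mathbb{R}^{2}$. Both proofs exploit radial symmetry at exactly the same point, but through different manifestations of it (compact embedding versus pointwise decay). Your version is more hands-on, gives a quantitative, $n$-uniform tail bound, and makes the role of $q>2$ transparent; the paper's is shorter and avoids the $\varepsilon/3$ patching. One small bookkeeping remark: you should record that $u_n\to u$ a.e.\ (along a subsequence), both to apply the generalized dominated convergence on $B_R(0)$ and to transfer the tail bound to the limit $u$, and then recover convergence of the full sequence by the usual subsequence-of-a-subsequence argument --- the same implicit step the paper takes when it extracts the weakly convergent subsequence of $(h_n)$.
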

\begin{proof}
	Arguing as in Corollary \ref{Convergencia em limitados}, there are 	$\alpha>4\pi$ close to $4\pi$ and $t>1$ close to $1$ such that the sequence
	$$
	h_n(x)=(e^{\alpha |u_n(x)|^{2}}-1),
	$$
	is a bounded sequence in $L^{t}(\mathbb{R}^N)$. Therefore,  for some subsequence of $(h_n)$, still denoted by itself, we derive that
	\begin{equation*}
		h_n \rightharpoonup h=(e^{\alpha |u|^{2}}-1) \quad \mbox{in} \quad L^{t}(\mathbb{R}^{2}).
	\end{equation*}
	For $t'=\frac{t}{t-1}$, we know that the embedding $H_{rad}^{1}(\mathbb{R}^N) \hookrightarrow L^{qt'}(\mathbb{R}^N)$ is compact, then
	$$
	u_n \to u \quad \mbox{in} \quad L^{qt'}(\mathbb{R}^N),
	$$	
	and so,
	$$
	|u_n|^{q} \to  |u|^{q}  \quad \mbox{in} \quad L^{t'}(\mathbb{R}^N).
	$$
	Thus,
	$$
	\lim_{n \to +\infty}\int_{\mathbb{R}^{2}}|u_n|^{q}h_n(x)\,dx=\int_{\mathbb{R}^{2}}|u|^{q}h(x)\,dx,
	$$
	that is,
	$$
	\lim_{n \to +\infty}\int_{\mathbb{R}^{2}}|u_n|^{q}(e^{\alpha |u_n(x)|^{2}}-1)\,dx=\int_{\mathbb{R}^{2}}|u|^{q}(e^{\alpha |u(x)|^{2}}-1)\,dx.
	$$
	Since
	$$
	|u_n|^{q}(e^{\alpha |u_n(x)|^{2}}-1) \geq 0 \quad \mbox{and} \quad |u|^{q}(e^{\alpha |u(x)|^{2}}-1) \geq 0,
	$$
	the last limit gives
	$$
	|u_n|^{q}(e^{\alpha |u_n(x)|^{2}}-1) \to |u|^{q}(e^{\alpha |u(x)|^{2}}-1) \quad \mbox{in} \quad L^{1}(\mathbb{R}^2).
	$$
	
\end{proof}

\begin{corollary} \label{Convergencia em limitados1} Let $(u_{n})$ be a sequence in $H_{rad}^{1}(\mathbb{R}^{2})$ with $u_n \in S(a)$ and
	$$
	\limsup_{n \to +\infty} |\nabla u_n |_2^{2}  < 1-a^2.
	$$
	If $u_n \rightharpoonup u$ in $H^{1}(\mathbb{R}^{2})$ and $u_n(x) \to u(x)$ a.e in $\mathbb{R}^{2}$, then
	$$
	F(u_n) \to F(u) \quad \mbox{and} \quad f(u_n)u_n \to f(u)u \,\, \mbox{in} \,\, L^{1}(\mathbb{R}^2).
	$$ 	
\end{corollary}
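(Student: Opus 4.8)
The plan is to promote the local convergence supplied by Corollary \ref{Convergencia em limitados} to convergence on all of $\mathbb{R}^2$, using the pointwise bounds \eqref{1.3}--\eqref{1.4} together with the global $L^1$-convergence furnished by Lemma \ref{convergencia}. First I would fix $\zeta>0$, $\alpha>4\pi$ close to $4\pi$ and $C>0$ so that, by \eqref{1.3} and \eqref{1.4},
$$\max\{|F(u_n)|,\,|f(u_n)u_n|\}\le \zeta|u_n|^{\tau+1}+C|u_n|^q\bigl(e^{\alpha|u_n|^2}-1\bigr)=:G_n \quad\text{for every } n,$$
the same bound holding for $|F(u)|$ and $|f(u)u|$ with $G_n$ replaced by the analogous limit quantity $G$ (apply \eqref{1.3}--\eqref{1.4} directly to $u$). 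The whole argument then reduces to showing $G_n\to G$ in $L^1(\mathbb{R}^2)$ and controlling tails.

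Next I would establish $G_n\to G$ in $L^1(\mathbb{R}^2)$ piece by piece. The exponential term converges, namely $|u_n|^q(e^{\alpha|u_n|^2}-1)\to |u|^q(e^{\alpha|u|^2}-1)$ in $L^1(\mathbb{R}^2)$, by Lemma \ref{convergencia}; here the hypothesis $\limsup_n|\nabla u_n|_2^2<1-a^2$ is precisely what permits the admissible choice of $\alpha$. For the polynomial term, since $\tau+1>2$ the embedding $H^1_{rad}(\mathbb{R}^2)\hookrightarrow L^{\tau+1}(\mathbb{R}^2)$ is compact, whence $u_n\to u$ in $L^{\tau+1}(\mathbb{R}^2)$ and therefore $|u_n|^{\tau+1}\to|u|^{\tau+1}$ in $L^1(\mathbb{R}^2)$ (continuity of the Nemytskii map $v\mapsto|v|^{\tau+1}$ from $L^{\tau+1}$ into $L^1$). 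Adding the two contributions gives $G_n\to G$ in $L^1(\mathbb{R}^2)$.

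Finally I would exploit that an $L^1(\mathbb{R}^2)$-convergent sequence is tight at infinity, i.e. $\sup_n\int_{\mathbb{R}^2\setminus B_R}G_n\,dx\to 0$ as $R\to+\infty$. Given $\varepsilon>0$, I fix $R$ with $\int_{\mathbb{R}^2\setminus B_R}G_n\,dx<\varepsilon$ for all $n$ and $\int_{\mathbb{R}^2\setminus B_R}G\,dx<\varepsilon$; since $|F(u_n)-F(u)|\le G_n+G$, the tail integral over $\mathbb{R}^2\setminus B_R$ is at most $2\varepsilon$ uniformly in $n$. On $B_R$, Corollary \ref{Convergencia em limitados} already gives $F(u_n)\to F(u)$ in $L^1(B_R)$. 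Splitting $\mathbb{R}^2=B_R\cup(\mathbb{R}^2\setminus B_R)$, letting $n\to\infty$ and then $\varepsilon\to0$ yields $F(u_n)\to F(u)$ in $L^1(\mathbb{R}^2)$. The term $f(u_n)u_n$ is handled identically: the bound \eqref{1.4} is the exact analogue of \eqref{1.3}, so the proof of Corollary \ref{Convergencia em limitados} applies verbatim (with \eqref{1.4} in place of \eqref{1.3}) to give $f(u_n)u_n\to f(u)u$ in $L^1(B_R)$, and the same tail estimate closes the global statement. The one genuinely delicate point, and exactly the role of Lemma \ref{convergencia}, is the global $L^1$-control of the exponential nonlinearity: only the strict gap $\limsup_n|\nabla u_n|_2^2<1-a^2$ allows a choice of $\alpha>4\pi$ for which $(e^{\alpha|u_n|^2}-1)$ stays bounded in some $L^t$, $t>1$, via Lemma \ref{Cao}, which is what forces the tails to be uniformly small.
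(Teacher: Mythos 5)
Your proof is correct and follows essentially the same route as the paper: the same domination $\max\{|F(u_n)|,|f(u_n)u_n|\}\le \zeta|u_n|^{\tau+1}+C|u_n|^{q}\bigl(e^{\alpha|u_n|^2}-1\bigr)$, with Lemma \ref{convergencia} handling the exponential part and the compact embedding $H_{rad}^{1}(\mathbb{R}^{2})\hookrightarrow L^{\tau+1}(\mathbb{R}^{2})$ the polynomial part. The only difference is presentational: where the paper concludes in one stroke via the generalized dominated convergence theorem with $L^{1}$-convergent dominating functions, you unpack that step into a uniform tail estimate (tightness of the $L^{1}$-convergent majorants) combined with the local convergence of Corollary \ref{Convergencia em limitados}, which is a valid elaboration of the same argument.
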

\begin{proof}By \eqref{1.3},
	$$
	|F(t)|\leq\zeta |t|^{\tau+1}+C|t|^{q}(e^{\alpha t^{2}}-1)\,\, \text{ for all }\, t \in \mathbb{R},
	$$
	where $\alpha>4\pi$ close to $4\pi$ and $q>2$ as in the last Lemma \ref{convergencia}. Therefore,
	\begin{equation} \label{Domina1}
		|F(u_n)| \leq \zeta |u_n|^{\tau+1}+C|u_n|^{q}(e^{\alpha |u_n|^{2}}-1).
	\end{equation}
	By 	Lemma \ref{convergencia},
	$$
	|u_n|^{q}(e^{\alpha |u_n(x)|^{2}}-1) \to |u|^{q}(e^{\alpha |u(x)|^{2}}-1) \quad \mbox{in} \quad L^{1}(\mathbb{R}^2),
	$$
	and by the compact embedding $H_{rad}^{1}(\mathbb{R}^N) \hookrightarrow L^{\tau+1}(\mathbb{R}^N)$,
	$$
	u_n \to u \quad \mbox{in} \quad L^{\tau+1}(\mathbb{R}^2).
	$$
	Now, we can use the Lebesgue's Theorem to conclude that
	$$
	F(u_n) \to F(u) \quad \mbox{in} \quad L^{1}(\mathbb{R}^2).
	$$
	A similar argument works to show that
	$$
	f(u_n)u_n \to f(u)u \quad \mbox{in} \quad L^{1}(\mathbb{R}^2).
	$$
	
\end{proof}
From now on, we will use the same notations introduced in Section 2 to apply our variational procedure, more precisely

\begin{itemize}
	\item[\rm (1)] $S(a)=\{u \in H^{1}(\mathbb{R}^2)\,:\, | u |_2=a\, \}$ is the sphere of radius $a>0$ defined with the norm $|\,\,\,\,|_2$.

	\item[\rm (2)] $J: H^{1}(\mathbb{R}^2)\rightarrow \mathbb{R}$ with
	$$
	J(u)=\frac{1}{2}\int_{\mathbb{R}^{2}} |\nabla u|^2 dx-\int_{\mathbb{R}^{2}} F(u)dx.
	$$

	\item[\rm (3)] $\mathcal{H}: H\rightarrow \mathbb{R}$  with
	\begin{equation*}
		\mathcal{H}(u, s)(x)=e^{s}u(e^{s}x).
	\end{equation*}
	
	\item[\rm (4)] $\tilde{J}: H\rightarrow \mathbb{R}$ with
	$$
	\tilde{J}(u, s)=\frac{e^{2s}}{2}\int_{\mathbb{R}^{2}} |\nabla u|^2 dx-\frac{1}{e^{2s}}\int_{\mathbb{R}^{2}} F(e^{s}u(x))dx.
	$$
	
\end{itemize}

\subsection{The minimax approach}

We will  prove that $\tilde{J}$ on $S(a)\times \mathbb{R}$  possesses a kind of mountain-pass geometrical structure.

\begin{lemma}\label{vicente} Assume that $(f_1)-(f_2)$ and ($f_3$) hold and let $u\in S(a)$ be arbitrary but fixed. Then we have:\\
	\noindent (i)  $\vert \nabla \mathcal{H}(u, s) \vert_{2}\rightarrow 0$ and $J(\mathcal{H}(u, s))\rightarrow 0$ as $s\rightarrow -\infty$;\\
	\noindent (ii) $\vert \nabla \mathcal{H}(u, s) \vert_{2}\rightarrow +\infty$ and $J(\mathcal{H}(u, s))\rightarrow -\infty$ as $s\rightarrow +\infty$.
\end{lemma}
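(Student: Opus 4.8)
The statement to prove is Lemma~\ref{vicente}, which is the $N=2$, exponential-critical-growth analogue of Lemma~\ref{vicentej}. The scaling operator here is $\mathcal{H}(u,s)(x)=e^{s}u(e^{s}x)$, which is the correct $L^2$-mass-preserving dilation in dimension $N=2$ (the general formula $e^{Ns/2}u(e^sx)$ reduces to $e^{s}u(e^sx)$ when $N=2$). So the plan is to mimic the proof of Lemma~\ref{vicentej} while replacing the polynomial estimates on $F$ by the exponential-type estimate \eqref{1.3} and the Trudinger--Moser control from Lemma~\ref{Cao}.

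Let me think about what I'd actually compute and where the difficulty lies.

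For the scaling identities: with $v=\mathcal{H}(u,s)$ and $N=2$, a direct change of variables gives $|v|_2^2=a^2$ (mass preserved), $|\nabla v|_2^2=e^{2s}|\nabla u|_2^2$, and $|v|_\xi^\xi=e^{(\xi-2)s}|u|_\xi^\xi$ for $\xi\geq 2$. These are exactly the $N=2$ specializations of \eqref{CONV0}--\eqref{CONVJ1*}. So $|\nabla\mathcal{H}(u,s)|_2\to 0$ and $|\nabla\mathcal{H}(u,s)|_2\to+\infty$ as $s\to-\infty,+\infty$ respectively follow immediately.

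=== PROOF PROPOSAL ===

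\begin{proof}
The argument parallels that of Lemma~\ref{vicentej}, with the polynomial bounds replaced by the exponential estimate \eqref{1.3} together with the Trudinger--Moser control of Lemma~\ref{Cao}. A direct change of variables using $\mathcal{H}(u,s)(x)=e^{s}u(e^{s}x)$ (the mass-preserving dilation in $N=2$) gives, for each fixed $u \in S(a)$,
\begin{equation*}
|\mathcal{H}(u,s)|_2^2=a^2,\qquad |\nabla \mathcal{H}(u,s)|_2^2=e^{2s}|\nabla u|_2^2,\qquad |\mathcal{H}(u,s)|_\xi^\xi=e^{(\xi-2)s}|u|_\xi^\xi\ (\xi\geq 2).
\end{equation*}
From the second identity we immediately obtain $|\nabla\mathcal{H}(u,s)|_2\to 0$ as $s\to-\infty$ and $|\nabla\mathcal{H}(u,s)|_2\to+\infty$ as $s\to+\infty$, which are the gradient assertions in $(i)$ and $(ii)$.

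For the energy in $(i)$, I would fix $\tau>3$ as in $(f_1)$ and $q>2$, choose $\alpha>4\pi$ close to $4\pi$, and apply \eqref{1.3} to bound $\int_{\mathbb{R}^2}|F(\mathcal{H}(u,s))|\,dx$ by $\zeta|\mathcal{H}(u,s)|_{\tau+1}^{\tau+1}+C\int_{\mathbb{R}^2}|\mathcal{H}(u,s)|^{q}(e^{\alpha|\mathcal{H}(u,s)|^2}-1)\,dx$. The first term tends to $0$ by the scaling identity for $\xi=\tau+1>2$. For the exponential term the key point is that, since $|\nabla\mathcal{H}(u,s)|_2^2=e^{2s}|\nabla u|_2^2\to 0$ and $|\mathcal{H}(u,s)|_2=a<1$, for $s$ sufficiently negative one has $\|\mathcal{H}(u,s)\|^2<1$, so Lemma~\ref{Cao} (applied after normalizing by the $H^1$-norm, as in Lemma~\ref{alphat11}) together with H\"older's inequality keeps $\int_{\mathbb{R}^2}|\mathcal{H}(u,s)|^{q}(e^{\alpha|\mathcal{H}(u,s)|^2}-1)\,dx$ bounded while the factor $|\mathcal{H}(u,s)|_{q t'}^{q}$ (for $t>1$ close to $1$) tends to $0$ by the scaling identity. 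Hence $\int_{\mathbb{R}^2}F(\mathcal{H}(u,s))\,dx\to 0$ and thus $J(\mathcal{H}(u,s))\to 0$, proving $(i)$.

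For $(ii)$ I would use the lower bound from $(f_4)$, namely $F(t)\geq \tfrac{\mu}{p}|t|^{p}$ with $p>4$, to estimate
\begin{equation*}
J(\mathcal{H}(u,s))\leq \frac{e^{2s}}{2}|\nabla u|_2^2-\frac{\mu}{p}\,e^{(p-2)s}|u|_p^p.
\end{equation*}
Since $p>4>2$, the negative term dominates as $s\to+\infty$, forcing $J(\mathcal{H}(u,s))\to-\infty$. The main obstacle is the energy part of $(i)$: unlike the Sobolev-critical case, the nonlinear term is controlled by an exponential rather than a power, so one cannot estimate it by a single power of the gradient norm. The resolution is precisely that as $s\to-\infty$ the full Sobolev norm $\|\mathcal{H}(u,s)\|$ drops below $1$, putting us in the subcritical regime of the Trudinger--Moser inequality where Lemma~\ref{Cao} supplies a uniform bound on the exponential factor, after which the small power $|\mathcal{H}(u,s)|_{qt'}^{q}\to 0$ gives the decay.
\end{proof}
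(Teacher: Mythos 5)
Your proposal is correct and follows essentially the same route as the paper's proof: the same scaling identities, the same observation that $\|\mathcal{H}(u,s)\|^2$ drops below $1$ for $s$ sufficiently negative so that Lemma \ref{Cao} and H\"older's inequality control the exponential factor while $|\mathcal{H}(u,s)|_{qt'}^{q}\to 0$, and the same use of $(f_4)$ with $p>4$ for part $(ii)$. The only cosmetic remark is that, like the paper itself, you invoke $(f_4)$ even though the lemma's hypotheses list only $(f_1)$--$(f_3)$, so this is not a discrepancy between your argument and theirs.
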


\begin{proof} \mbox{} By a straightforward calculation, it follows that
	\begin{equation} \label{CONV0}\int_{\mathbb{R}^2}\vert \mathcal{H}(u, s)(x)\vert^{2}\,dx=a^{2}, \quad \int_{\mathbb{R}^2}|\mathcal{H}(u, s)(x)|^{\xi}\,dx= e^{(\xi-2)s}\int_{\mathbb{R}^2}|u(x)|^{\xi}\,dx, \quad \forall \xi \geq 2,
	\end{equation}
	and
	\begin{equation} \label{CONV1*}
		\int_{\mathbb{R}^2}\vert \nabla \mathcal{H}(u, s)(x)\vert^{2}\,dx=e^{2s}\int_{\mathbb{R}^2}|\nabla u|^2 dx.
	\end{equation}
	From the above equalities, fixing $\xi>2$, we have
	\begin{equation} \label{CONV1}
		\vert \nabla \mathcal{H}(u, s) \vert_{2}\rightarrow 0 \quad \mbox{and} \quad |\mathcal{H}(u,s)|_{\xi} \to 0 \quad  \mbox{as} \quad s \to -\infty.
	\end{equation}
	Thus, there are $s_1<0$ and $m \in (0,1)$ such that
	$$
	\| \mathcal{H}(u, s)\|^{2} \leq m, \quad \forall s \in (-\infty_1,s_1].
	$$
By \eqref{1.3},
	$$
	|F(t)|\leq\zeta |t|^{\tau+1}+C|t|^{q}(e^{\alpha t^{2}}-1)\,\, \text{ for all }\, t \in \mathbb{R},
	$$
	where $\alpha>4\pi$ close to $4\pi$ and $q>2$ as in the last Lemma \ref{convergencia}. Hence,
	$$
	|F( \mathcal{H}(u, s))| \leq \zeta| \mathcal{H}(u, s)|^{\tau+1}+C| \mathcal{H}(u, s)|^{q}(e^{\alpha | \mathcal{H}(u, s)|^{2}}-1), \quad \forall s \in (-\infty,s_1].
	$$
	Using the H\"older's inequality together with Lemma \ref{Cao}, there exists $C=C(u,m)>0$
	such that
   $$
  \int_{\mathbb{R}^2} (e^{\alpha | \mathcal{H}(u, s)|^{2}}-1)^{t}dx\leq C,
   $$
   and so,
	$$
	\int_{\mathbb{R}^2}|F( \mathcal{H}(u, s))|\,dx \leq \zeta\int_{\mathbb{R}^2}| \mathcal{H}(u, s)|^{\tau+1}\,dx+C_1\Big(\int_{\mathbb{R}^2}| \mathcal{H}(u, s)|^{qt'}\,dx\Big)^{1/t'}, \quad \forall s \in (-\infty,s_1],
	$$
	where $t'=\frac{t}{t-1}$, and $t>1$ is close to 1.  Now, by using (\ref{CONV1}),
	$$
	\int_{\mathbb{R}^2}|F( \mathcal{H}(u, s))| \to 0 \quad \mbox{as} \quad s \to -\infty,
	$$
	from where it follows that
	$$
	J(\mathcal{H}(u, s))\rightarrow 0 \quad \mbox{as} \quad  s\rightarrow -\infty,
	$$
	showing $(i)$.
	
	In order to show $(ii)$, note that by (\ref{CONV1*}),
	$$
	|\nabla \mathcal{H}(u, s) |_{2}\rightarrow +\infty \quad \mbox{as} \quad s \to +\infty.
	$$
	On the other hand, by $(f_4)$,
	$$
	J(\mathcal{H}(u, s)) \leq \frac{1}{2}|\nabla \mathcal{H}(u,s)|_2^2-\frac{\mu}{p}\int_{\mathbb{R}^2}|\mathcal{H}(u,s)|^{p}\,dx=e^{2s}\int_{\mathbb{R}^2}|\nabla u|^2 dx-\frac{\mu e^{(p-2)s}}{p}\int_{\mathbb{R}^2}|u(x)|^{p}\,dx.
	$$
	Since $p>4$, the last inequality yields
	$$
	J(\mathcal{H}(u, s))\rightarrow -\infty \,\, \mbox{as} \,\, s\rightarrow +\infty.
	$$

\end{proof}

\begin{lemma}  \label{P1} Assume that $(f_1)-(f_3)$ hold.  Then there exists $K(a)>0$ small enough such that
	$$
	0<\sup_{u\in A} J(u)<\inf_{u\in B} J(u)
	$$
	with
	$$
	A=\left\{u\in S(a), \int_{\mathbb{R}^2} |\nabla u|^2 dx\leq K(a) \right\}\quad \mbox{and} \quad B=\left\{u\in S(a), \int_{\mathbb{R}^2} |\nabla u|^2 dx=2K(a) \right\}.
	$$
\end{lemma}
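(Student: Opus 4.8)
The plan is to run the scheme of Lemma \ref{PJ1} essentially verbatim, the only new ingredient being that the nonlinear term must be controlled through the Trudinger--Moser inequality of Lemma \ref{Cao} rather than by a pure power. Since $(f_3)$ forces $F\geq 0$, for $u\in A$ and $v\in B$ I may write
$$
J(v)-J(u)\geq\frac12|\nabla v|_2^2-\frac12|\nabla u|_2^2-\int_{\mathbb{R}^2}F(v)\,dx\geq\frac12 K(a)-\int_{\mathbb{R}^2}F(v)\,dx,
$$
so it suffices to show that $\int_{\mathbb{R}^2}F(v)\,dx$ is of higher order than $K(a)$ as $K(a)\to0$, uniformly on $B$.

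The heart of the matter is a uniform estimate on $\int F$ for functions of small Dirichlet energy. First I would fix $q>2$ as in \eqref{1.3}, and then choose $\alpha>4\pi$ close to $4\pi$ and $r>1$ close to $1$ (with conjugate $r'$) so that $4\pi/(r\alpha)>a^2$; this is exactly where $a\in(0,1)$ is indispensable, since $r\alpha\to4\pi$ forces $4\pi/(r\alpha)\to1$. Fixing $m\in(a^2,\,4\pi/(r\alpha))$ and taking $K(a)$ so small that $2K(a)+a^2\leq m$, every $u\in S(a)$ with $|\nabla u|_2^2\leq 2K(a)$ obeys $\|u\|^2=|\nabla u|_2^2+a^2\leq m<1$, whence the rescaled function $w=u/\|u\|$ satisfies $|\nabla w|_2\leq1$ and $|w|_2\leq1$. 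Using $\alpha u^2=\alpha\|u\|^2w^2\leq\alpha m\,w^2$, the elementary inequality $(e^{s}-1)^{r}\leq e^{rs}-1$, and then Lemma \ref{Cao} (legitimate because $r\alpha m<4\pi$ and $|w|_2\leq1$), I obtain
$$
\int_{\mathbb{R}^2}\bigl(e^{\alpha u^2}-1\bigr)^{r}\,dx\leq\int_{\mathbb{R}^2}\bigl(e^{r\alpha m\,w^2}-1\bigr)\,dx\leq C,
$$
a bound independent of $u$.

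Feeding this into \eqref{1.3} via Hölder's inequality with exponents $r'$ and $r$, and then invoking the Gagliardo--Nirenberg inequality (for which $\gamma=1-2/\xi$ when $N=2$), I get
$$
\int_{\mathbb{R}^2}F(u)\,dx\leq\zeta|u|_{\tau+1}^{\tau+1}+C\,|u|_{qr'}^{q}\leq C_3\bigl(|\nabla u|_2^2\bigr)^{(\tau-1)/2}+C_4\bigl(|\nabla u|_2^2\bigr)^{\,q/2-1/r'}.
$$
Both exponents exceed $1$: the first because $\tau>3$ already gives $(\tau-1)/2>1$, and the second because $q/2-1/r'>1$ can be secured by choosing $r$ close enough to $1$ (so $r'$ is large) for the fixed $q>2$. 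Therefore, for $u\in A$ and $v\in B$,
$$
J(v)-J(u)\geq\frac12 K(a)-C_5\,K(a)^{(\tau-1)/2}-C_6\,K(a)^{\,q/2-1/r'},
$$
and the linear term dominates once $K(a)$ is small, yielding $\inf_B J-\sup_A J\geq\delta>0$. The inequality $0<\sup_A J$ follows from the same estimate, since $A$ contains functions of arbitrarily small Dirichlet energy (for instance $\mathcal{H}(u_0,s)$ with $s\to-\infty$), on which $J(u)\geq\frac12|\nabla u|_2^2-o(|\nabla u|_2^2)>0$.

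The main obstacle is not the mountain-pass bookkeeping, which is identical to Lemma \ref{PJ1}, but the simultaneous calibration of $\alpha,r,m,q$ and $K(a)$: one must maintain $r\alpha m<4\pi$ for Trudinger--Moser while keeping $m>a^2$ (which is what forces $a<1$) and $q/2-1/r'>1$ for the power count. Once these compatibility conditions are seen to be consistent, the remainder is routine.
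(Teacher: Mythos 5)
Your proposal is correct and follows essentially the same route as the paper's proof: control $\int_{\mathbb{R}^2}F(v)\,dx$ via \eqref{1.3}, the Trudinger--Moser bound of Lemma \ref{Cao} applied after normalizing (using $2K(a)+a^2<1$, which is where $a<1$ enters), H\"older, and Gagliardo--Nirenberg, then note that both resulting powers of $K(a)$ exceed $1$. Your explicit calibration of $\alpha$, $r$, $m$ and the verification that $q/2-1/r'>1$ for $r$ close to $1$ merely spells out steps the paper leaves implicit.
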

\begin{proof}
	We will need the following Gagliardo-Sobolev inequality: for any $\xi> 2$,
	$$
	|u|_\xi\leq C(\xi, 2)|\nabla u|_2^{\gamma}|u|_2^{1-\gamma},
	$$
	where $\gamma=2(\frac{1}{2}-\frac{1}{\xi})$. If we fix $K(a)<\frac{1-a^2}{2}$, $|\nabla u|^{2}_{2}\leq K(a)$ and $|\nabla v|^{2}_2=2K(a)$, the conditions $(f_1)-(f_2)$ combined with Lemma \ref{Cao} ensure that
	$$
	\int_{\mathbb{R}^2}F(u)\,dx \leq \zeta|u|^{\tau+1}_{\tau+1}+C_2|u|^{q}_{qt'}
	$$
	where $q>2$, $t'=\frac{t}{t-1}>1$, $t$ closed to 1. Then, by the Gagliardo-Sobolev inequality,
	$$
	\int_{\mathbb{R}^2}F(v)\,dx \leq C_1|\nabla v|_2^{\tau-1}+C_2|\nabla v|^{(q-2/t')}_{2}.
	$$
	From $(f_3)$, $F(u)\geq 0$ for any $u\in H^{1}(\mathbb{R}^{2})$, then
	\begin{eqnarray*}
		J(v)-J(u) &=&\frac{1}{2}\int_{\mathbb{R}^2}|\nabla v|^{2}\,dx-\frac{1}{2}\int_{\mathbb{R}^2}|\nabla u|^{2}\,dx-\int_{\mathbb{R}^2}F(v)\,dx+\int_{\mathbb{R}^2}F(u)\,dx\\
		&\geq &\frac{1}{2}\int_{\mathbb{R}^2}|\nabla v|^{2}\,dx-\frac{1}{2}\int_{\mathbb{R}^2}|\nabla u|^{2}\,dx-\int_{\mathbb{R}^2}F(v)\,dx,
	\end{eqnarray*}
	and so,
	$$
	J(v)-J(u) \geq \frac{1}{2}K(a)-C_3K(a)^{\frac{\tau-1}{2}}-C_4 K(a)^{(\frac{q}{2}-\frac{1}{t'})}.
	$$
	Since $\tau>3$ and $t'>0$ with $\frac{q}{2}-\frac{1}{t'}>1$, decreasing $K(a)$ if necessary, it follows that
	$$
	\frac{1}{2}K(a)-C_3K(a)^{\frac{\tau-1}{2}}-C_4 K(a)^{(\frac{q}{2}-\frac{1}{t'})}>0,
	$$
	showing the desired result.
\end{proof}

As a byproduct of the last lemma is the following corollary.
\begin{corollary} \label{newcor}
	There exists $K(a)>0$ small enough such that if $u \in S(a)$ and $|\nabla u|^2_{2}\leq K(a)$, then $J(u) >0$.
\end{corollary}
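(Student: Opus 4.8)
The plan is to mimic the proof of the analogous corollary in the Sobolev critical case (the one following Lemma~\ref{PJ1}), but now reusing the estimate on $\int_{\mathbb{R}^2}F(u)\,dx$ that was already established inside the proof of Lemma~\ref{P1}. The one structural difference from the subcritical situation is that the exponential factor hidden in $F$ cannot be controlled by a bare power estimate; it must be absorbed through H\"older's inequality and the Trudinger--Moser inequality (Lemma~\ref{Cao}). This is exactly why the hypothesis imposes $|\nabla u|_2^2\le K(a)$ with $K(a)<\frac{1-a^2}{2}$: since $|u|_2^2=a^2$, this forces $\|u\|^2=|\nabla u|_2^2+a^2<\frac{1+a^2}{2}<1$, keeping $\|u\|$ strictly below the Trudinger--Moser threshold so that $\int_{\mathbb{R}^2}(e^{\alpha|u|^2}-1)^t\,dx$ remains bounded.

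Concretely, I would fix $K(a)$ as in Lemma~\ref{P1} and recall the inequality obtained there: for $u\in S(a)$ with $|\nabla u|_2^2\le K(a)$, combining \eqref{1.3}, H\"older's inequality, Lemma~\ref{Cao}, and the Gagliardo--Nirenberg inequality applied with exponents $\tau+1$ and $qt'$ yields
$$
\int_{\mathbb{R}^2}F(u)\,dx \le C_1|\nabla u|_2^{\,\tau-1}+C_2|\nabla u|_2^{\,q-2/t'}.
$$
Substituting this into the definition of $J$ gives
$$
J(u)\ge \tfrac12|\nabla u|_2^2-C_1|\nabla u|_2^{\,\tau-1}-C_2|\nabla u|_2^{\,q-2/t'}.
$$
Writing $x=|\nabla u|_2^2$ (note $x>0$, since a nonzero constant cannot belong to $S(a)\subset L^2(\mathbb{R}^2)$, so $|\nabla u|_2>0$), the right-hand side factors as
$$
x\Big(\tfrac12-C_1 x^{\frac{\tau-3}{2}}-C_2 x^{\frac q2-\frac1{t'}-1}\Big).
$$
Because $\tau>3$ and $t'$ was chosen with $\frac q2-\frac1{t'}>1$, both exponents $\frac{\tau-3}{2}$ and $\frac q2-\frac1{t'}-1$ are strictly positive, so the bracket is a decreasing function of $x$ tending to $\tfrac12$ as $x\to0^+$. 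Hence, after possibly shrinking $K(a)$ so that the bracket is positive at $x=K(a)$, monotonicity guarantees it stays positive on all of $(0,K(a)]$, and therefore $J(u)>0$ for every admissible $u$.

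The genuinely delicate estimate—the exponential control via Trudinger--Moser—is not redone here; it is inherited from Lemma~\ref{P1}, which is why I expect the corollary itself to be short. The only two points that still require care within the corollary are the strict positivity $|\nabla u|_2^2>0$ (needed to divide by $x$ and to conclude a strict inequality) and the monotonicity argument, which upgrades positivity from the regime of small gradients to uniform positivity over the entire ball $\{u\in S(a):|\nabla u|_2^2\le K(a)\}$.
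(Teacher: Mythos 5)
Your proof is correct and follows essentially the same route as the paper: the paper's own argument for this corollary is simply to reuse the estimate $J(u)\geq \frac{1}{2}|\nabla u|_{2}^{2}-C_1|\nabla u|_2^{\tau-1}-C_2|\nabla u|_2^{q-2/t'}$ from Lemma \ref{P1} and note it is positive for $K(a)$ small. Your added details (strict positivity of $|\nabla u|_2$ on $S(a)$ and the monotonicity of the bracketed factor) are correct elaborations of the paper's ``for $K(a)$ small enough'' step.
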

\begin{proof} Arguing as in the last lemma,
	$$
	J(u) \geq \frac{1}{2}|\nabla u|_{2}^{2}-C_1|\nabla u|_2^{\tau-1}-C_2|\nabla u|^{(q-\frac{2}{q^{*}})}_{2}>0,
	$$
	for $K(a)>0$ small enough.	
\end{proof}

In what follows, we fix $u_0 \in S(a)$ and apply Lemma \ref{vicente}  and Corollary  \ref{newcor}  to get two numbers $s_1<0$ and $s_2>0$, of such way that the functions $u_1=\mathcal{H}(u_0, s_1)$ and $u_2=\mathcal{H}(u_0, s_2)$ satisfy
$$
|\nabla u_1|^2_2<\frac{K(a)}{2},\,\,  |\nabla u_2|_2^2>2K(a),\,\, J(u_1)>0\,\,\mbox{and} \,\, J(u_2)<0.
$$

Now, following the ideas from Jeanjean \cite{jeanjean1}, we fix the following mountain pass level given by
$$
\gamma_\mu(a)=\inf_{h \in \Gamma}\max_{t \in [0,1]}J(h(t))
$$
where
$$
\Gamma=\left\{h \in C([0,1],S(a))\,:\,h(0)=u_1 \,\, \mbox{and} \,\, h(1)=u_2 \right\}.
$$
From Lemma \ref{P1},
$$
\max_{t \in [0,1]}J(h(t))>\max \left\{J(u_1),J(u_2)\right\}.
$$
\begin{lemma} \label{ESTMOUNTPASS} There holds $\displaystyle \lim_{\mu \to +\infty}\gamma_\mu(a)=0$.

\end{lemma}
\begin{proof} In what follow we set the path $h_0(t)=\mathcal{H}\big(u_0, (1-t)s_1+ts_2\big) \in \Gamma$. Then, by $(f_4)$,
	$$
	\gamma_\mu(a) \leq \max_{t \in [0,1]}J(h_0(t)) \leq \max_{r \geq 0}\left\{\frac{r^{2}}{2}|\nabla u_{0}|_{2}^{2}-\frac{\mu}{p}r^{p-2}|u_{0}|_{p}^{p}\right\}
	$$
	and so,
	$$
	\gamma_\mu(a) \leq C_2\left(\frac{1}{\mu}\right)^{\frac{2}{p-4}} \to 0 \quad \mbox{as} \quad \mu \to +\infty,
	$$
	for some $C_2>0$. Here, we have used the fact that $p>4$.
	
\end{proof}

Arguing as Section 2, in what follows $(u_n)$ denotes the $(PS)$ sequence associated with the level $\gamma_\mu(a)$, which satisfies:
\begin{equation} \label{gamma(a)1}
	J(u_n) \to \gamma_\mu(a), \,\, \mbox{as} \,\, n \to +\infty,
\end{equation}
\begin{equation} \label{EQ101}
	-\Delta u_n-f(u_n)=\lambda_nu_n\ + o_n(1), \,\, \mbox{in} \,\, (H^{1}(\mathbb{R}^2))^*,
\end{equation}
for some sequence $(\lambda_n) \subset \mathbb{R}$, and
\begin{equation} \label{EQ1**1}
	Q(u_n)=\int_{\mathbb{R}^2}|\nabla u_n|^2 dx +2 \int_{\mathbb{R}^2} F(u_n) dx- \int_{\mathbb{R}^2} f(u_n) u_n dx\to 0,\,\, \mbox{as} \,\, n \to +\infty.
\end{equation}
Moreover, $(u_n)$ is a bounded sequence, and so, the number $\lambda_n$  must satisfy the equality below
\begin{equation} \label{lambdan1}
	\lambda_n=\frac{1}{a^{2}}\left\{ |\nabla u_n|^{2}_{2} -\int_{\mathbb{R}^2} f(u_n) u_n dx \right\}+o_n(1).
\end{equation}

\begin{lemma}\label{newlem1} There holds
	$$
	\limsup_{n \to +\infty}\int_{\mathbb{R}^2}F(u_n)\,dx \leq \frac{2}{\theta-4}\gamma_\mu(a).
	$$
\end{lemma}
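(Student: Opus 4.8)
The plan is to combine the energy identity coming from \eqref{gamma(a)1} with the Pohozaev-type relation \eqref{EQ1**1}, and then invoke the Ambrosetti--Rabinowitz-type condition $(f_3)$ to close the estimate. This is the exact analogue for $N=2$ of the argument used in Lemma \ref{Limitacao}, and it is purely algebraic once the two global relations for the (PS) sequence are in hand.

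First I would rewrite \eqref{gamma(a)1}. Since $J(u_n)=\frac{1}{2}|\nabla u_n|_2^2-\int_{\mathbb{R}^2}F(u_n)\,dx$, the convergence $J(u_n)\to\gamma_\mu(a)$ gives
$$
|\nabla u_n|_2^2=2\gamma_\mu(a)+2\int_{\mathbb{R}^2}F(u_n)\,dx+o_n(1).
$$
Next I would substitute this expression for $|\nabla u_n|_2^2$ into the relation $Q(u_n)=o_n(1)$ from \eqref{EQ1**1}, namely
$$
|\nabla u_n|_2^2+2\int_{\mathbb{R}^2}F(u_n)\,dx-\int_{\mathbb{R}^2}f(u_n)u_n\,dx=o_n(1).
$$
After cancellation this yields the clean identity
$$
2\gamma_\mu(a)+4\int_{\mathbb{R}^2}F(u_n)\,dx-\int_{\mathbb{R}^2}f(u_n)u_n\,dx=o_n(1).
$$

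The decisive step is then to use $(f_3)$. Integrating the pointwise inequality $\theta F(t)\le tf(t)$ at $t=u_n(x)$ over $\mathbb{R}^2$ gives $\theta\int_{\mathbb{R}^2}F(u_n)\,dx\le\int_{\mathbb{R}^2}f(u_n)u_n\,dx$. Feeding this into the displayed identity above produces
$$
(\theta-4)\int_{\mathbb{R}^2}F(u_n)\,dx\le 2\gamma_\mu(a)+o_n(1),
$$
and since $\theta>4$ I may divide by $\theta-4$ and pass to the $\limsup$ to obtain the claimed bound. I do not expect a genuine analytic obstacle here: the two global identities \eqref{gamma(a)1} and \eqref{EQ1**1} already encode all the information needed, and the only structural ingredient is the super-quadraticity $(f_3)$, whose precise role is to trade the a priori uncontrolled term $\int_{\mathbb{R}^2}f(u_n)u_n\,dx$ against $\int_{\mathbb{R}^2}F(u_n)\,dx$. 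The only points requiring care are the bookkeeping of the $o_n(1)$ terms (legitimate since $(u_n)$ is bounded, so all the integrals involved are finite) and the strict sign $\theta-4>0$, which is guaranteed by hypothesis.
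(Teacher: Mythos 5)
Your proposal is correct and follows essentially the same route as the paper: both derive the identity $2\gamma_\mu(a)+4\int_{\mathbb{R}^2}F(u_n)\,dx-\int_{\mathbb{R}^2}f(u_n)u_n\,dx=o_n(1)$ by combining $J(u_n)\to\gamma_\mu(a)$ with $Q(u_n)=o_n(1)$, and then apply $(f_3)$ to trade $\int f(u_n)u_n$ for $\theta\int F(u_n)$ and divide by $\theta-4>0$. The only cosmetic difference is that the paper first forms the combination $2J(u_n)+Q(u_n)$ before re-substituting the energy identity, whereas you substitute directly; the algebra is identical.
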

\begin{proof} Using the fact that $J(u_n)=\gamma_\mu(a)+o_n(1)$ and $Q(u_n)=o_n(1)$, it follows that
	$$
	2{J}(u_n)+Q(u_n)=2\gamma_\mu(a)+o_n(1),
	$$
	and so,
	\begin{eqnarray}\label{equa1}
	2|\nabla u_n|_{2}^{2}-\int_{\mathbb{R}^2}f(u_n)u_n\,dx=2\gamma_\mu(a)+o_n(1).
	\end{eqnarray}
Using that $J(u_n)=\gamma_\mu(a)+o_n(1)$, we get
$$
4\int_{\mathbb{R}^2} F(u_n) dx+4\gamma_\mu(a)+o_n(1)-\int_{\mathbb{R}^2}f(u_n)u_n\,dx=2\gamma_\mu(a)+o_n(1).
$$
Hence, by $(f_3)$,
$$
2\gamma_\mu(a)+o_n(1)=\int_{\mathbb{R}^2}f(u_n)u_n\,dx-4\int_{\mathbb{R}^2} F(u_n) dx\geq (\theta-4)\int_{\mathbb{R}^2} F(u_n) dx.
$$
Since $\theta>4$, we have
$$
	\limsup_{n \to +\infty}\int_{\mathbb{R}^2}F(u_n)\,dx \leq \frac{2}{\theta-4}\gamma_\mu(a).
$$
\end{proof}

\begin{lemma} \label{goodest} The sequence $(u_n)$ satisfies  $\displaystyle \limsup_{n \to +\infty}|\nabla u_n|_{2}^{2} \leq \frac{2(\theta-2)}{\theta-4} \gamma_\mu(a)$. Hence, there exists $\mu^*>0$ such that
	$$
	\limsup_{n \to +\infty}|\nabla u_n|_{2}^{2}<1-a^2, \,\,\text{for any} \,\,\,\mu \geq \mu^*.
	$$	
	
\end{lemma}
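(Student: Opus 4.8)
The plan is to read off the first inequality directly from the mountain pass energy identity combined with Lemma \ref{newlem1}, and then to deduce the second statement by taking $\mu$ large and invoking Lemma \ref{ESTMOUNTPASS}. No serious obstacle is expected: the claim is essentially a bookkeeping consequence of relations that have already been established.

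First I would exploit $J(u_n)=\gamma_\mu(a)+o_n(1)$, that is,
$$
\frac{1}{2}|\nabla u_n|_{2}^{2}-\int_{\mathbb{R}^2}F(u_n)\,dx=\gamma_\mu(a)+o_n(1),
$$
to isolate the gradient term as
$$
|\nabla u_n|_{2}^{2}=2\gamma_\mu(a)+2\int_{\mathbb{R}^2}F(u_n)\,dx+o_n(1).
$$
Taking $\limsup$ on both sides and feeding in the bound $\limsup_{n\to+\infty}\int_{\mathbb{R}^2}F(u_n)\,dx\leq \frac{2}{\theta-4}\gamma_\mu(a)$ from Lemma \ref{newlem1}, I obtain
$$
\limsup_{n \to +\infty}|\nabla u_n|_{2}^{2}\leq 2\gamma_\mu(a)+\frac{4}{\theta-4}\gamma_\mu(a)=\frac{2(\theta-2)}{\theta-4}\gamma_\mu(a),
$$
which is exactly the first assertion. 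The only thing to verify here is the elementary arithmetic $2+\frac{4}{\theta-4}=\frac{2(\theta-2)}{\theta-4}$, and that the coefficient $\frac{2(\theta-2)}{\theta-4}$ is positive, both of which are immediate from $\theta>4$.

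For the second statement I would use that $a\in(0,1)$, so that $1-a^2>0$. By Lemma \ref{ESTMOUNTPASS} we have $\gamma_\mu(a)\to 0$ as $\mu\to+\infty$; hence, applying the definition of this limit with the threshold $\varepsilon=\frac{(\theta-4)(1-a^2)}{2(\theta-2)}>0$, there exists $\mu^*=\mu^*(a)>0$ such that
$$
\frac{2(\theta-2)}{\theta-4}\gamma_\mu(a)<1-a^2,\quad \text{for all } \mu\geq\mu^*.
$$
Combining this with the first inequality gives $\limsup_{n\to+\infty}|\nabla u_n|_2^2<1-a^2$ for every $\mu\geq\mu^*$, as desired. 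The mildly delicate point to keep in mind is that $\mu^*$ must be chosen so that the strict inequality holds for \emph{all} $\mu\geq\mu^*$ simultaneously, which is exactly what the convergence in Lemma \ref{ESTMOUNTPASS} provides.
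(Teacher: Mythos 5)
Your proposal is correct and follows essentially the same route as the paper: the identity $|\nabla u_n|_2^2=2\gamma_\mu(a)+2\int_{\mathbb{R}^2}F(u_n)\,dx+o_n(1)$ from $J(u_n)=\gamma_\mu(a)+o_n(1)$, combined with Lemma \ref{newlem1}, gives the first bound, and the second assertion follows from Lemma \ref{ESTMOUNTPASS} together with $1-a^2>0$. The arithmetic $2+\frac{4}{\theta-4}=\frac{2(\theta-2)}{\theta-4}$ checks out, and your choice of $\varepsilon$ correctly produces a $\mu^*$ valid for all $\mu\geq\mu^*$.
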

\begin{proof}  Since $J(u_n)=\gamma_\mu(a)+o_n(1)$, we have
	$$
	\int_{\mathbb{R}^2}|\nabla u_n|^2\,dx=2\gamma_\mu(a)+2\int_{\mathbb{R}^2}F(u_n)\,dx+o_n(1).
	$$
	Thereby, by Lemma \ref{newlem1},
$$
	\limsup_{n \to +\infty}|\nabla u_n|_{2}^{2}  \leq \frac{2(\theta-2)}{\theta-4} \gamma_\mu(a).
	$$
\end{proof}

\begin{lemma} \label{mu} Fix $\mu \geq \mu^*$, where $\mu^*$ is given in Lemma \ref{goodest}. Then,  $(\lambda_n)$ is a bounded sequence with

	$$
	\limsup_{n \to +\infty} |\lambda_n| \leq \frac{2\theta}{a^2(\theta-4)}\gamma_\mu(a) \quad \mbox{and} \quad  \limsup_{n \to +\infty} \lambda_n =-\frac{2}{a^2}\liminf_{n \to +\infty}\int_{\mathbb{R}^2}F(u_n)\,dx.
	$$
\end{lemma}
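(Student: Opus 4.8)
The plan is to collapse everything to a single clean identity for $\lambda_n a^2$ and then read off both assertions from it. First I would combine the two structural relations already at our disposal. The Lagrange multiplier identity \eqref{lambdan1} gives
$$
\lambda_n a^2 = |\nabla u_n|_2^2 - \int_{\mathbb{R}^2} f(u_n) u_n\, dx + o_n(1),
$$
while the vanishing of the dilation derivative \eqref{EQ1**1} rearranges to
$$
\int_{\mathbb{R}^2} f(u_n) u_n\, dx = |\nabla u_n|_2^2 + 2\int_{\mathbb{R}^2} F(u_n)\, dx + o_n(1).
$$
Substituting the second expression into the first, the gradient terms cancel and I obtain the key identity
$$
\lambda_n a^2 = -2\int_{\mathbb{R}^2} F(u_n)\, dx + o_n(1).
$$

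The second assertion is then immediate: dividing by $a^2$ and taking $\limsup$, together with $\limsup(-c\,x_n) = -c\liminf x_n$ for $c>0$, gives $\limsup_{n} \lambda_n = -\frac{2}{a^2}\liminf_{n}\int_{\mathbb{R}^2} F(u_n)\,dx$. Moreover, since $(f_3)$ forces $F\geq 0$, the key identity shows $\lambda_n \leq o_n(1)$, so that eventually $|\lambda_n| a^2 = -\lambda_n a^2 + o_n(1) = \int_{\mathbb{R}^2} f(u_n)u_n\,dx - |\nabla u_n|_2^2 + o_n(1) \leq \int_{\mathbb{R}^2} f(u_n)u_n\,dx + o_n(1)$.

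For the quantitative bound I would control $\int f(u_n)u_n$. From \eqref{equa1} one has $\int_{\mathbb{R}^2} f(u_n)u_n\,dx = 2|\nabla u_n|_2^2 - 2\gamma_\mu(a) + o_n(1)$, and feeding in the gradient estimate $\limsup_n |\nabla u_n|_2^2 \leq \frac{2(\theta-2)}{\theta-4}\gamma_\mu(a)$ from Lemma \ref{goodest} yields, after the simplification $\frac{4(\theta-2)}{\theta-4} - 2 = \frac{2\theta}{\theta-4}$, that $\limsup_n \int_{\mathbb{R}^2} f(u_n)u_n\,dx \leq \frac{2\theta}{\theta-4}\gamma_\mu(a)$. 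Combined with $|\lambda_n| a^2 \leq \int_{\mathbb{R}^2} f(u_n)u_n\,dx + o_n(1)$ this gives exactly $\limsup_n |\lambda_n| \leq \frac{2\theta}{a^2(\theta-4)}\gamma_\mu(a)$, and boundedness of $(\lambda_n)$ follows at once since $\gamma_\mu(a)$ is finite.

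The computation is purely algebraic, so there is no real analytic obstacle; the only point requiring care is the $\limsup$/$o_n(1)$ bookkeeping. In particular, the step $|\lambda_n| = -\lambda_n + o_n(1)$ hinges on $\lambda_n$ being nonpositive up to a vanishing error, which is precisely what the key identity together with $F\geq 0$ delivers. No new compactness or Trudinger--Moser estimates are needed: this lemma rests entirely on \eqref{lambdan1}, \eqref{EQ1**1}, \eqref{equa1} and Lemma \ref{goodest}.
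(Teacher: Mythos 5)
Your proof is correct, and it rests on exactly the same two ingredients as the paper's: the Lagrange multiplier identity \eqref{lambdan1} and the Pohozaev-type limit \eqref{EQ1**1}, whose combination gives $\lambda_n a^2=-2\int_{\mathbb{R}^2}F(u_n)\,dx+o_n(1)$ and hence the second assertion, just as in the paper. Where you genuinely diverge is in the quantitative bound: the paper estimates $|\lambda_n|a^2\leq |\nabla u_n|_2^2+\int f(u_n)u_n\,dx+o_n(1)$ by the triangle inequality and bounds $\int f(u_n)u_n\,dx$ via \eqref{EQ1**1} together with Lemmas \ref{newlem1} and \ref{goodest}; as written, that route yields $\limsup|\lambda_n|\leq \frac{4\theta-4}{a^2(\theta-4)}\gamma_\mu(a)$ rather than the stated $\frac{2\theta}{a^2(\theta-4)}\gamma_\mu(a)$. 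You instead exploit the sign information $\lambda_n\leq o_n(1)$ coming from $F\geq 0$ to replace $|\lambda_n|$ by $-\lambda_n+o_n(1)$, drop the gradient term with the correct sign, and bound $\int f(u_n)u_n\,dx$ through \eqref{equa1}; the computation $\frac{4(\theta-2)}{\theta-4}-2=\frac{2\theta}{\theta-4}$ then delivers precisely the constant claimed in the statement. So your bookkeeping is tighter than the paper's and actually justifies the stated bound cleanly (indeed the identity $\lambda_na^2=-2\int F(u_n)\,dx+o_n(1)$ combined with Lemma \ref{newlem1} would even give the sharper constant $\frac{4}{a^2(\theta-4)}$); there is no gap in your argument.
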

\begin{proof} The boundedness of $(u_n)$ yields that $(\lambda_n)$ is bounded, because
	$$
	\lambda_n|u_n|_{2}^{2} =|\nabla u_n|_{2}^{2}-\int_{\mathbb{R}^2}f(u_n)u_n\,dx+o_n(1),
	$$
	and as $|u_n|_{2}^{2}=a^2$, we have
	$$
	\lambda_na^2=|\nabla u_n|_{2}^{2}-\int_{\mathbb{R}^2}f(u_n)u_n\,dx+o_n(1).
	$$
	Hence,
	$$
	|\lambda_n|a^2 \leq |\nabla u_n|_{2}^{2}+\int_{\mathbb{R}^2}f(u_n)u_n\,dx+o_n(1).
	$$

The limit (\ref{EQ1**1}) together with Lemmas \ref{newlem1} and \ref{goodest} ensures that $(\int_{\mathbb{R}^2}f(u_n)u_n\,dx)$ is bounded with
$$
\limsup_{n \to +\infty}\int_{\mathbb{R}^2}f(u_n)u_n\,dx\leq \frac{2\theta}{\theta-4} \gamma_\mu(a).
$$
This is enough to conclude that $(\lambda_n)$  is a bounded sequence with
$$
\limsup_{n \to +\infty} |\lambda_n| \leq \frac{2\theta}{a^2(\theta-4)}\gamma_\mu(a).
$$
In order to prove the second inequality, the equality
	$$
	\lambda_na^2=|\nabla u_n|_{2}^{2}-\int_{\mathbb{R}^2}f(u_n)u_n\,dx+o_n(1)
	$$
	together with the  limit (\ref{EQ1**1}) leads to
	$$
	\lambda_na^2=-2\int_{\mathbb{R}^2}F(u_n)\,dx+o_n(1),
	$$
	showing the desired result.
\end{proof}

Now, we restrict our study to the space $H_{rad}^{1}(\mathbb{R}^2)$. For any $\mu \geq \mu^*$, using Lemmas \ref{convergencia} and \ref{goodest}, Corollary \ref{Convergencia em limitados1}, it follows that
$$
\lim_{n \to +\infty}\int_{\mathbb{R}^2}f(u_n)u_n\,dx=\int_{\mathbb{R}^2}f(u)u\,dx,
$$
and
$$
\lim_{n \to +\infty}\int_{\mathbb{R}^2}F(u_n)\,dx=\int_{\mathbb{R}^2}F(u)\,dx,
$$
where $u_n \rightharpoonup u$ in $H^{1}(\mathbb{R}^2)$. The last limit implies that $u \not=0$, because otherwise, Corollary \ref{Convergencia em limitados1} gives
$$
\lim_{n \to +\infty}\int_{\mathbb{R}^2}F(u_n)\,dx=\lim_{n \to +\infty}\int_{\mathbb{R}^2}f(u_n)u_n\,dx=0,
$$
and by Lemma \ref{mu},
$$
\limsup_{n \to +\infty} \lambda_n \leq 0.
$$

Since $(u_n)$ is bounded in $H^{1}(\mathbb{R}^2)$ and $\displaystyle \limsup_{n \to +\infty}|\nabla u_n|_{2}^{2}<1-a^2$ if $\mu \geq \mu^*$, Corollary \ref{Convergencia em limitados1}  together with $(f_1)-(f_2)$ and the equality below
\begin{equation*}
	\lambda_n| u_n|_{2}^{2}=|\nabla u_n|^{2}_{2}-\int_{\mathbb{R}^2}f(u_n)u_n\,dx+o_n(1),
\end{equation*}
lead to
\begin{equation}  \label{mu2}
	\lambda_na^2=|\nabla u_n|_{2}^{2}+o_n(1).
\end{equation}
From this,
$$
0 \geq \limsup_{n \to +\infty} \lambda_n a^2= \limsup_{n \to +\infty} |\nabla u_n|^{2}_{2} \geq \liminf_{n \to +\infty} |\nabla u_n|^{2}_{2}\geq 0,
$$
then
$$
|\nabla u_n|^{2}_{2} \to 0,
$$
which is absurd, because $\gamma_\mu(a)>0$.

\section{Proof of Theorem \ref{T2}}

The above analysis ensures that the weak limit $u$ of $(u_n)$ is nontrivial.  Moreover, the equality
$$
\limsup_{n \to +\infty} \lambda_n =-\frac{2}{a^2}\liminf_{n \to +\infty}\int_{\mathbb{R}^2}F(u_n)\,dx
$$
ensures that
$$
\limsup_{n \to +\infty} \lambda_n =-\frac{2}{a^2}\liminf_{n \to +\infty}\int_{\mathbb{R}^2}F(u)\,dx<0.
$$
From this, for some subsequence, still denoted by $(\lambda_n)$, we can assume  that
$$
\lambda_n \to \lambda_a<0 \quad \mbox{as} \quad n \to +\infty.
$$
Now, the equality (\ref{EQ101})
implies that
\begin{equation} \label{EQ21}
	-\Delta u-f(u)=\lambda_au \quad \mbox{in} \quad \mathbb{R}^2.
\end{equation}
Thus,
$$
|\nabla u|_{2}^{2}-\lambda_a|u|_{2}^{2}=\int_{\mathbb{R}^2}f(u)u\,dx.
$$
On the other hand,
$$
|\nabla u_n|_{2}^{2}-\lambda_n|u_n|_{2}^{2}=\int_{\mathbb{R}^2}f(u_n)u_n\,dx+o_n(1),
$$
or yet,
$$
|\nabla u_n|_{2}^{2}-\lambda_a|u_n|_{2}^{2}=\int_{\mathbb{R}^2}f(u_n)u_n\,dx+o_n(1).
$$
Recalling that
$$
\lim_{n \to +\infty}\int_{\mathbb{R}^2}f(u_n)u_n\,dx=\int_{\mathbb{R}^2}f(u)u\,dx,
$$
we derive that
$$
\lim_{n \to +\infty}(|\nabla u_n|_{2}^{2}-\lambda_a|u_n|_{2}^{2})=|\nabla u|_{2}^{2}-\lambda_a|u|_{2}^{2}.
$$
Since $\lambda_a<0$, the last limit implies that
$$
u_n \to u \quad \mbox{in} \quad H^{1}(\mathbb{R}^2),
$$
implying that $|u|_{2}^{2}=a$. This establishes the desired result.\\

\noindent \textsc{Claudianor O. Alves } \\
Unidade Acad\^{e}mica de Matem\'atica\\
Universidade Federal de Campina Grande \\
Campina Grande, PB, CEP:58429-900, Brazil \\
\texttt{coalves@mat.ufcg.edu.br} \\
\noindent and \\
\noindent \textsc{Chao Ji} \\
Department of Mathematics\\
East China University of Science and Technology \\
Shanghai 200237, PR China \\
\texttt{jichao@ecust.edu.cn}\\
\noindent and \\
\noindent \textsc{Ol\'{i}mpio Hiroshi, Miyagaki} \\
Departamento de Matem\'{a}tica \\
Universidade Federal de S\~ao Carlos \\
S\~ao Carlos,  SP, CEP:13565-905,  Brazil\\
\texttt{olimpio@ufscar.br}


\begin{thebibliography}{99}



\bibitem{1} A. Adimurthi. {\it Existence of Positive solutions of the semilinear Dirichlet problem with critical growth for the $N$-Laplacian}. Ann. Sc. Norm. Super. Pisa 17 (1990), 393-413.	


\bibitem{AkahoriIbrahimKikuchiNawa1} T. Akahori, S. Ibrahim, H. Kikuchi, and H. Nawa. {\it Existence of a ground state and scattering for a nonlinear
Schr\"odinger equation with critical growth.} Selecta Math. (N.S.) 19(2)(2013), 545-609.


\bibitem{AkahoriIbrahimKikuchiNawa2}  T. Akahori, S. Ibrahim, H. Kikuchi, and H. Nawa. {\it Global dynamics above the ground state energy for the combined
power type nonlinear Schr\"odinger equations with energy critical growth at low frequencies}. arXiv.1510.08034, 2019.

\bibitem{A} C. O. Alves. {\it Multiplicity of solutions for a class of elliptic problem in $\mathbb{R}^2$ with Neumann conditions}. J. Differential Equations 219 (2005), 20--39.

\bibitem{AdoOM} {C. O. Alves, J. M. B. do \'O and O. H. Miyagaki}. {\it On nonlinear perturbations of a periodic elliptic problem in $\mathbb{R}^2$ involving critical growth}, Nonlinear Anal. 56 (2004), 781--791.


\bibitem{AlvesGio} C.O. Alves and G.M. Figueiredo.{\it   On multiplicity and concentration of positive solutions for a class of quasilinear problems with critical exponential growth in $\mathbb{R}^{N}$.} J. Differential Equations 219 (2009), 1288-1311.

\bibitem{ASS1} C.O. Alves and S.H.M. Soares.  {\it Nodal solutions for singularly
	perturbed equations with critical exponential growth.} { J. Differential Equations} 234 (2007), 464-484.

\bibitem{Montenegro}  C. O. Alves, M. A. S. Souto and M. Montenegro. {\it Existence of a ground state solution for a nonlinear scalar field
equation with critical growth. } Calc. Var. Partial Differential Equations 43(3-4)(2012), 537-554.

\bibitem{BartschJeanjeanSaove} T. Bartsch, L. Jeanjean and N. Soave. {\it Normalized solutions for a system of coupled cubic Schr\"odinger equations on $\mathbb{R}^3$}.
J. Math. Pures Appl. (9) 106(4)(2016), 583-614.


\bibitem{Bartschmolle}  T. Bartsch, R. Molle, M. Rizzi and G.Verzini. {\it Normalized solutions of mass supercritical Schr\"odinger  equations with potential.} arXiv:2008.07431V1, 2020.

\bibitem{BartschSaove} T. Bartsch and N. Soave. {\it Multiple normalized solutions for a competing system of Schr\"odinger equations}. Calc. Var. Partial
Differential Equations 58(1) (2019), art 22, pp.24.

\bibitem{BellazziniJeanjeanLuo} J. Bellazzini, L. Jeanjean and T. Luo. {\it Existence and instability of standing waves with prescribed norm for a class of
Schr\"odinger-Poisson equations}. Proc. Lond. Math. Soc. (3), 107(2)(2013), 303-339.

\bibitem{BN} H. Brezis and L. Nirenberg. \textit{Positive solutions of nonlinear elliptic equations involving critical Sobolev exponents,}  Comm. Pure Appl. Math.  36 (1983), 437--477.

\bibitem{Cao} {D. M. Cao.} {\it Nontrivial solution of semilinear elliptic equation with critical exponent in $\mathbb{R}^2$}. {Comm. Partial Differential Equation} {17} (1992),{407--435}.

 \bibitem{CazenaveLivro} T. Cazenave,\emph{ Semilinear Schr\"odinger equations,} Courant Lecture Notes in Mathematics {\bf 10 }(New York
University, Courant Institute of Mathematical Sciences, New York; American Mathematical Society,
Providence, RI, 2003, 323 pp. ISBN: 0-8218-3399-5.
\bibitem{Cheng} X. Cheng, C.X. Miao and L.F. Zhao. {\it Global well-posedness and scattering for nonlinear Schrodinger equations with
combined nonlinearities in the radial case. } J. Differential Equations 261(6)(2016), 2881-2934.
\bibitem{CingolaniJeanjean} S. Cingolani and L. Jeanjean. {\it Stationary waves with prescribed $L^2$-norm for the planar Schr\"odinger-Poisson system}. SIAM J.
Math. Anal. 51(4)(2019), 3533-3568.

\bibitem{DMR} {D.G. de Figueiredo, O.H. Miyagaki and B. Ruf}. {\it Elliptic equations in
	$\mathbb{R}^2$ with nonlinearities in the critical growth range}. {Calc.
	Var.  Partial
Differential Equations} 3 (1995), 139-153.

\bibitem{DdOR} {D.G. de Figueiredo, Jo\~ao Marcos do \'O and B. Ruf}. {\it On an
	inequality by N. Trudinger and J. Moser and related elliptic equations}. {Comm. Pure Appl. Math.} 55 (2002), 1-18.

\bibitem{OS} J. M. B.  do \'O and M.A.S. Souto. {\it On a class of
	nonlinear Schr\"{o}dinger equations in $\mathbb{R}^2$ involving critical
	growth}. J. Differential Equations  {{174}} (2001), 289-311.

\bibitem{doORuf} J. M. B. do \'O and B. Ruf. {\it On a Schr\"odinger equation with periodic potential and critical growth in $\mathbb{R}^{2}$}.  Nonlinear Differential Equations Appl. 13 (2006), 167-192.


\bibitem{2} J. M. B.  do \'O. {\it Quasilinear elliptic equations with exponential nonlinearities}. Comm. Appl. Nonlin. Anal. 2 (1995), 63-72.



\bibitem{5} J. M. B. do \'O, M. de Souza, E. de Medeiros and
U. Severo. {\it An improvement for the Trudinger-Moser inequality and applications}. J. Differential Equations 256 (2014), 1317-1349.

\bibitem{GP} J. Garcia Azorero and I. Peral Alonso. {\it Multiplicity of solutions for elliptic problems with critical exponent or
	with a nonsymmetric term.} Trans. Amer. Math. Soc. 2 (1991), 877-895.

\bibitem{Guo} T.X. Gou and L. Jeanjean.{\it \,  Multiple positive normalized solutions for nonlinear Schr\"odinger systems. } Nonlinearity
31(5)(2018), 2319-2345.


\bibitem{jeanjean1}L. Jeanjean. \textit{Existence of solutions with prescribed norm for semilinear elliptic equations.} Nonlinear Anal.  28 (1997), 1633-1659.

\bibitem{JeanjeanJendrejLeVisciglia} L. Jeanjean, J. Jendrej, T. T. Le and N. Visciglia. {\it Orbital stability of ground states for a Sobolev critical
Schr\"odinger equation}. arXiv.2008.12084, 2020.


\bibitem{JeanjeanLu} L. Jeanjean and S.S. Lu. {\it Nonradial normalized solutions for nonlinear scalar field equations}. Nonlinearity  32(12)(2019), 4942-
4966.

\bibitem{JeanjeanLe} L. Jeanjean and T. T. Le. {\it    Multiple normalized solutions for a Sobolev critical Schr\"odinger   equations}. arXiv:2011.02945v1, 2020.



\bibitem{Lions} P.L. Lions. {\it The concentration-compactness principle in the calculus of variations. The locally compact case, part 2}. Anal.
Inst. H. Poincar\'{e}, Sect. C 1 (1984),  223-283.


\bibitem{MEDERSKISCHINO} J. Mederski and J. Schino. {\it Least energy solutions to a cooperative system of  Schr\"odinger equations with prescribed $L^2$-bounds: At least  $L^2$-crtical growth. } arXiv:2101.02611v1, 2021



\bibitem{Miao}  C. X. Miao, G. X. Xu and L. F. Zhao. {\it The dynamics of the 3D radial NLS with the combined terms.} Comm. Math. Phys.
318(3)(2013), 767-808.
\bibitem{M} {J. Moser}. {\it A sharp form of an inequality by N. Trudinger}. {Ind. Univ. Math. J.} (20) (1971), {1077--1092}.


\bibitem{BenedettaTavaresVerzini} B. Noris, H. Tavares and G. Verzini. {\it Normalized solutions for nonlinear Schr\"odinger systems on bounded domains.}
Nonlinearity  32(3)(2019), 10441072.


\bibitem{palais} R.S. Palais, The principle of symmetric criticality, {\it Comm. Math. Phys.} {\bf 69} (1979), 19-30.


\bibitem{Nicola1}  N. Soave. {\it  Normalized ground states for the NLS equation with combined nonlinearities.} J. Differential Equations  269(9)(2020), 6941-
6987.



\bibitem{Nicola2} N. Soave. {\it Normalized ground states for the NLS equation with combined nonlinearities: the Sobolev critical case.} J. Funct. Anal.
279(6)(2020), 108610, 43.

\bibitem{Stefanov} A. Stefanov. {\it On the normalized ground states of second order PDE's with mixed power non-linearities}. Comm. Math. Phys.
369(3)(2019), 929-971.


\bibitem{TaoVisanZhang} T. Tao, M. Visan and X. Zhang. {\it The nonlinear Schr\"odinger equation with combined power-type nonlinearities}. Comm.
Partial Differential Equations 32(7-9)(2007), 1281-1343.


\bibitem{T} {N. S. Trudinger}. {\it On imbedding into Orlicz spaces and some application}. {J. Math Mech.} {17} (1967), {473--484}.




\bibitem{WangLi} W. Wang, Q. Li, J. Zhou and  Y. Li. {\it Normalized solutions for p-Laplacian equations with a $L^{2}$-supercritical growth}.  Ann. Funct. Anal. 12(1) (2020), doi:10.1007/s43034-020-00101-w.


\bibitem{Willem}M. Willem. {\it  Minimax Theorems,} Birkhauser, 1996.


\end{thebibliography}
\end{document}